\newtheorem{theorem}{Theorem}[section]
\newtheorem{claim}{}[theorem]
\newtheorem{lemma}[theorem]{Lemma}
\newtheorem{problem}[theorem]{Problem}
\newtheorem{proposition}[theorem]{Proposition}
\theoremstyle{definition}
\newcommand{\bR}{\mathbb R}
\newcommand{\bC}{\mathbb C}
\newcommand{\bZ}{\mathbb Z}
\newcommand{\cC}{\mathcal{C}}
\newcommand{\cF}{\mathcal{F}}
\newcommand{\cL}{\mathcal{L}}
\newcommand{\cM}{\mathcal{M}}
\newcommand{\cU}{\mathcal{U}}
\DeclareMathOperator{\si}{si}
\DeclareMathOperator{\cl}{cl}
\DeclareMathOperator{\GF}{GF}
\DeclareMathOperator{\AG}{AG}
\DeclareMathOperator{\rank}{rank}
\newcommand{\elem}{\varepsilon}
\newcommand{\del}{\!\setminus\!}
\newcommand{\U}{U_{2,4} \oplus U_{2,4}}
\title{$2$-Modular Matrices}
\author{James Oxley}
\address{Mathematics Department, Louisiana State University, Baton Rouge, Louisiana, USA}
\email{oxley@math.lsu.edu}
\author{Zach Walsh}
\address{Mathematics Department, Louisiana State University, Baton Rouge, Louisiana, USA}
\email{walsh@lsu.edu}
\keywords{integer programming, unimodular matrix, representable matroid, excluded minor}
\subjclass[2010]{05B35, 90C10}
\date{\today}
\begin{document}

\maketitle

\begin{abstract}
	An integer matrix $A$ is $\Delta$-modular if the determinant of each $\rank(A) \times \rank(A)$ submatrix has absolute value at most $\Delta$. 
	The class of $1$-modular, or unimodular, matrices is of fundamental significance in both integer programming theory and matroid theory.
	A 1957 result of Heller shows that the maximum number of nonzero, pairwise non-parallel columns of a rank-$r$ unimodular matrix is ${r + 1 \choose 2}$.
	We prove that, for each sufficiently large integer $r$, the maximum number of nonzero, pairwise non-parallel columns of a rank-$r$ $2$-modular matrix is ${r + 2 \choose 2} - 2$.
	
\end{abstract}


\begin{section}{Introduction}
This paper is motivated by the theory of integer programming.
Given a \emph{constraint matrix} $A\in \bZ^{m\times n}$ and vectors $b\in \bZ^m$ and $c\in \bZ^n$, a fundamental problem of integer programming is to find an integer vector $x$ in the polyhedron $\{x\in \bR^n\colon Ax\le b\}$ that maximizes the inner product $c^Tx$.
In general, one cannot expect to optimize an integer program efficiently, but better efficiency is possible if the constraint matrix has special structure.

Given a positive integer $\Delta$, an integer matrix $A$ is \emph{$\Delta$-modular} if the determinant of each $\rank(A)\times \rank(A)$ submatrix of $A$ has absolute value at most $\Delta$. 
A $1$-modular matrix is \emph{unimodular}, and integer programs with a unimodular matrix can be optimized in strongly polynomial time, because every vertex of the polytope $\{x\in \bR^n\colon Ax\le b\}$ is integral \cite{HK}.
This fact has led to considerable interest in the complexity of integer programming with a $\Delta$-modular constraint matrix (see, for example, \cite{VC} and \cite{Artmann+}).
This complexity is unknown when $\Delta \ge 3$, but a recent breakthrough of Artmann, Weismantel, and Zenklusen \cite{AWZ} shows that integer programs with a $2$-modular, or \emph{bimodular}, constraint matrix can be optimized in strongly polynomial time. 
In view of this result, the structural properties of $2$-modular matrices are of significant interest.
A well-known result of Heller~\cite{Heller} shows that a rank-$r$ unimodular matrix has at most ${r + 1 \choose 2}$ nonzero, pairwise non-parallel columns.
 We prove the analogous result for $\Delta = 2$.

 \begin{theorem} \label{main}
 For each sufficiently large integer $r$, the maximum number of nonzero, pairwise non-parallel columns of a rank-$r$ $2$-modular matrix is ${r + 2 \choose 2} - 2$.
 \end{theorem}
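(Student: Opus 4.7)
The plan is to prove the upper and lower bounds separately, with the upper bound being the bulk of the work.

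For the \emph{lower bound}, I would exhibit a rank-$r$ $2$-modular matrix attaining ${r+2 \choose 2}-2$ pairwise non-parallel nonzero rows. The natural starting point is a unimodular representation of the graphic matroid $M(K_{r+1})$, contributing ${r+1 \choose 2}$ rows. To this core I would adjoin $r-1$ additional integer vectors, chosen so that each new $r \times r$ subdeterminant formed with existing rows has absolute value in $\{0,1,2\}$. One concrete template is to take the signed vertex-edge incidence matrix of $K_{r+1}$ (rank $r$, with ${r+1 \choose 2}$ rows) and append $r-1$ vectors of the form $e_i + e_j$ along a carefully selected set of vertex pairs; the $2$-modularity of the augmented matrix then reduces to a finite combinatorial verification on the triangles and $4$-cycles of $K_{r+1}$.

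For the \emph{upper bound}, let $A$ be a rank-$r$ $2$-modular matrix with the maximum number of pairwise non-parallel nonzero rows, and let $M = M(A)$ be the associated matroid whose ground set is the set of rows. After unimodular column operations we may assume that some basis $B$ of $M$ gives $A|_B = I$; the $2$-modularity condition then forces every row to have entries in $\{-2,-1,0,1,2\}$, with an auxiliary argument handling the case in which every basis of $A$ has determinant of absolute value $2$. Partition the rows of $A$ into the \emph{unimodular core} --- rows with all entries in $\{-1,0,1\}$ --- and the \emph{residual set} --- rows with at least one entry of absolute value $2$. Heller's theorem bounds the unimodular core by ${r+1 \choose 2}$ pairwise non-parallel rows, so the task reduces to showing that the residual set contains at most $r-1$ pairwise non-parallel rows, once $r$ is sufficiently large.

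The main obstacle is this last bound on the residual set. The plan is to assign each residual row a \emph{defect pattern} recording the positions of its $\pm 2$ entries and to show, by direct $r \times r$ subdeterminant calculations, that any two residual rows whose defect patterns overlap in certain ways produce a minor of determinant at least $3$ in absolute value, contradicting $2$-modularity. This forces the defect patterns to be pairwise ``compatible'' in a very restrictive sense, yielding the bound of $r-1$. To push the argument through in matroid language, I would invoke a Geelen--Whittle--Nelson-type structural result for dense matroids in a minor-closed class, which, in the $2$-modular setting, should force the unimodular core of the extremal matroid to be essentially $M(K_{r+1})$; the hypothesis that $r$ is sufficiently large is exactly what is needed to exclude the sporadic low-rank configurations that such structural theorems permit, after which a direct count combining Heller's bound with the residual count yields ${r+1 \choose 2} + (r-1) = {r+2 \choose 2} - 2$.
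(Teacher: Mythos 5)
Your lower-bound sketch is in the spirit of the paper's construction: the extremal examples are obtained from a unimodular representation of $M(K_{r+1})$ together with $r-1$ additional columns (equivalently, they are simplified projections of $M(K_{r+2})$), and their $2$-modularity is verified by an inductive determinant argument. That part is plausible in outline, though the verification is an induction on $r$ rather than the bounded check on triangles and $4$-cycles you describe.

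The upper bound, however, has a fatal gap. After normalizing to the form $[I_r \,|\, X]$ you partition the rows into a ``unimodular core'' (all entries in $\{-1,0,1\}$) and a ``residual set,'' and you invoke Heller's theorem to bound the core by ${r+1 \choose 2}$. But a matrix with entries in $\{-1,0,1\}$ need not be unimodular --- the $3\times 3$ matrix with rows $(1,1,0)$, $(0,1,1)$, $(1,0,1)$ has determinant $2$ --- so Heller's theorem does not apply to your core. Worse, the extremal matrices $A_r$ and $A'_r$ have \emph{all} entries in $\{-1,0,1\}$: in your decomposition their residual set is empty and the ``core'' already has ${r+2 \choose 2}-2 > {r+1 \choose 2}$ pairwise non-parallel rows, so the proposed reduction ``Heller bound plus at most $r-1$ residual rows'' cannot possibly yield the theorem. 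The entire difficulty, which your last paragraph delegates to an unspecified structural theorem that ``should force the unimodular core to be essentially $M(K_{r+1})$,'' is precisely where the paper does its work: it passes to excluded minors ($U_{2,5}$, $F_7$, $R_9$, $\U$), applies a Geelen--Nelson--Walsh reduction producing either a spanning-clique restriction or a highly vertically connected minor with many special points, and then classifies the single-, two-, and $k$-element extensions of a clique avoiding those minors to settle the spanning-clique case and to bound the special points. None of this is replaced by a ``defect pattern'' analysis of $\pm 2$ entries, which concerns a set of rows that is empty in the extremal examples.
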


This was proven independently by Lee et al. \cite{LPSX}, who showed that this bound holds for all $r$.
The previous best upper bound was ${r + 1 \choose 2} + C\cdot r$ for a large constant $C$, due to Geelen, Nelson, and Walsh~\cite{complex}.
The bound of Theorem~\ref{main} is tight, for example, for the matrices $A_r$ and $A'_r$ in Figure \ref{matrices}, where $D_r$ is the $r \times {r \choose 2}$ matrix whose columns consist of all $r$-tuples with exactly two nonzero entries, the first equal to $1$ and the second equal to $-1$.

\begin{figure}
\begin{center}
		$A_r = \left[ \begin{tabular}{ccc|ccc|ccc}
			   &  &  &  &  &   & $1$ & $\cdots$ & $1$  \\
			   \cline{7-9}
			   &  &  &  &  &  &  &  &   \\
			   &  $I_{r}$  &  &  &  $D_{r}$  &  &  & $I_{r-1}$ & \\
			   &  &  &  &  &  &  &  &  \\
			   &  &  &  &  &  &  &  &  \\
		\end{tabular} \right]$
	\end{center} 
	
\begin{center}
		$A'_r = \left[ \begin{tabular}{ccc|ccc|ccc|c}
			   &  &  &  &  &   & $1$ & $\cdots$ & $1$ & $1$ \\
			     &  &  &  &  &    & $1$ & $\cdots$ & $1$ & $1$ \\
			\cline{7-10}
			   &  $I_{r}$ &  &  & $D_{r}$  &  &  & & & $0$\\
			   &  &  &  &  &   &  &  $-I_{r-2}$ & & $\vdots$ \\
			   &  &  &  &  &  &  &  &  & $0$ \\
		\end{tabular} \right]$
	\end{center} 
	\caption{$A_r$ and $A'_r$.}
	\label{matrices}
	\end{figure}

We prove Theorem~\ref{main} using matroids.
For each positive integer $\Delta$, let $\cM_{\Delta}$ denote the class of matroids with a representation over $\bR$ as a $\Delta$-modular matrix.
We say that such a matroid is \emph{$\Delta$-modular}.
An integer matrix $A$ is \emph{totally $\Delta$-modular} if the determinant of every submatrix of $A$ has absolute value at most $\Delta$.
We define $\overline \cM_{\Delta}$ to be the class of matroids with a representation over $\bR$ as a totally $\Delta$-modular matrix.
Clearly $\overline \cM_{\Delta} \subseteq \cM_{\Delta}$ for each $\Delta \ge 1$, because every totally $\Delta$-modular matrix is also $\Delta$-modular.
Since every unimodular matrix is row-equivalent to a totally unimodular matrix, the classes $\overline \cM_1$ and $\cM_1$ are in fact equal.

The class $\cM_1$ is the class of \emph{regular matroids}.
Tutte~\cite{Tutte} proved that this class coincides with the class of matroids representable over every field, and that there are exactly three minor-minimal matroids not in $\cM_1$.
For $\Delta \ge 2$, the class $\cM_{\Delta}$ is not very well-studied.
It is proved in \cite{complex} that $\cM_{\Delta}$ is minor-closed, and that every matroid in $\cM_{\Delta}$ is representable over every field with characteristic greater than $\Delta$.
In particular, every matroid in $\cM_2$ is representable over $\GF(3)$ and $\GF(5)$; such a matroid is called \emph{dyadic}.

In view of Tutte's results and the fundamental significance of unimodular matrices, there are several natural open questions concerning $\cM_{\Delta}$.

\begin{problem} \label{open}
Let $\Delta$ be a positive integer.
\begin{enumerate}[$(i)$]
\item What is the maximum size of a simple rank-$r$ matroid in $\cM_{\Delta}$, and which matroids attain this maximum?

\item What are the minor-minimal matroids that are not in $\cM_{\Delta}$?

\item Is $\overline \cM_{\Delta}$ equal to $\cM_{\Delta}$?

\item Is $\overline \cM_{\Delta}$ closed under duality?
\end{enumerate}
\end{problem}

Theorem~\ref{main} shows that the maximum size of a simple rank-$r$ $2$-modular matroid is ${r + 2 \choose 2} - 2$, if $r$ is sufficiently large, and the vector matroids of $A_r$ and $A'_r$ show that there are at least two matroids that attain this maximum.
In order to prove Theorem~\ref{main}, we show that $(iii)$ and $(iv)$ have affirmative answers for $\Delta = 2$, and we find several excluded minors for $\cM_2$, including $U_{2,5}$, $F_7$, $\U$, and $\AG(2,3)\del e$.
In fact, we prove the following result, which is stronger than Theorem~\ref{main}.
A \emph{line} in a matroid is a rank-$2$ flat, and a line is \emph{long} if it contains at least three \emph{points}, that is, rank-one flats.
The matroid $R_9$ is the ternary Reid geometry \cite[p.~654]{Oxley}, which is the simple rank-$3$ matroid consisting of long lines $L_1, L_2, L_3$ with a common intersection point $x$ so that $|L_1| = |L_2| = 4$,  $|L_3| = 3$, and both elements in $L_3 - \{x\}$ are on four long lines.

\begin{theorem} \label{main matroids}
For each sufficiently large integer $r$, the maximum size of a simple rank-$r$ matroid with no minor in $\{U_{2,5}, F_7, \U, R_9\}$ is ${r + 2 \choose 2} - 2$.
\end{theorem}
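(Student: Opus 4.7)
The plan is to induct on $r$. For the lower bound, both matroids $A_r$ and $A'_r$ in Figure~\ref{matrices} have size ${r+2\choose 2}-2$ and are $2$-modular by construction; a direct check verifies that neither contains $U_{2,5}$, $F_7$, $\U$, or $R_9$ as a minor. For the upper bound, fix a simple rank-$r$ matroid $M$ in the class and an element $e\in E(M)$. The class is minor-closed, so $\si(M/e)$ lies in it at rank $r-1$, and induction gives $|\si(M/e)|\le {r+1\choose 2}-2$. Since the exclusion of $U_{2,5}$ forces every line of $M$ to have at most four points, a direct count of parallel classes of $M/e$ gives
\[
|M|\,=\,1+|\si(M/e)|+\ell_3(e)+2\,\ell_4(e),
\]
where $\ell_k(e)$ denotes the number of lines of size $k$ incident with $e$. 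Hence the inductive step reduces to exhibiting some element $e$ with $\ell_3(e)+2\,\ell_4(e)\le r$.

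The technical heart of the proof is a structural description of the $4$-point lines of $M$. The exclusion of $\U$ forces any two $4$-point lines to intersect, and the exclusion of $R_9$ prevents three long lines through a common point from being configured in the obstructive way exhibited by the Reid geometry. The goal is a dichotomy: either (a) every $4$-point line of $M$ passes through a single common point $x$ (the configuration modelled by $e_1$ in $A_r$), or (b) the collection of $4$-point lines spans only bounded rank. In case (a), choose any element $e\ne x$; such $e$ lies on at most one $4$-point line, so $\ell_4(e)\le 1$, and $\ell_3(e)$ is controlled by the geometry of $M$ near $e$ after the long-line pencil through $x$ is removed. In case (b), pick $e$ outside the low-rank span of the $4$-point lines, giving $\ell_4(e)=0$, and bound $\ell_3(e)\le r$ by reducing to an essentially regular residue governed by Heller's theorem. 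A naive averaging argument cannot work directly: in $A_r$ the average of $\ell_3+2\ell_4$ strictly exceeds $r$ because the single element $e_1$ has $\ell_3(e_1)+2\ell_4(e_1)=2(r-1)$, so the good element $e$ must be chosen structurally rather than on average.

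The main obstacle is proving the structural dichotomy above. The key auxiliary input will be a classification of the dense simple matroids of small rank (typically rank $3$ and $4$) with no minor in $\{U_{2,5},F_7,\U,R_9\}$; this is where $R_9$ earns its keep, by ruling out the three-concurrent-long-lines pattern that would otherwise support denser configurations than $A_r$ and $A'_r$. The assumption that $r$ is sufficiently large is essential to discard low-rank exceptional cases that would otherwise interfere with the count. Once the geometry of $4$-point lines is pinned down, the displayed identity, combined with the regular-matroid bound ${s+1\choose 2}$ from Heller at rank $s$, closes the induction and delivers $|M|\le {r+2\choose 2}-2$.
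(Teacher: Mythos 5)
Your plan is a direct induction on rank via single-element contraction, which is a genuinely different route from the paper (the paper never inducts on rank; it uses the connectivity reduction of Geelen--Nelson--Walsh to pass to a minor that either has a spanning-clique restriction or is highly vertically connected with many ``dense'' elements, and then analyses extensions of cliques). Unfortunately the plan has two gaps that I believe are fatal as stated. First, the induction has no foundation and cannot eliminate additive error. The bound ${r+2\choose 2}-2$ is \emph{false} at small ranks: $\AG(2,3)$ is simple of rank $3$, has $9$ elements, and has no minor in $\{U_{2,5},F_7,\U,R_9\}$ (it is ternary with all lines of size $3$), while ${5\choose 2}-2=8$. Since ${r+2\choose 2}-{r+1\choose 2}=r+1$, your identity $|M|=1+|\si(M/e)|+\ell_3(e)+2\ell_4(e)$ together with $\ell_3(e)+2\ell_4(e)\le r$ propagates any additive excess $c$ from rank $r-1$ to rank $r$ \emph{unchanged}; so even a perfect structural dichotomy only yields $|M|\le{r+2\choose 2}-2+c$, where $c$ is the excess at the lowest rank where your descent is valid. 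There is no mechanism in the proposal for establishing the exact bound at any specific starting rank, and doing so is essentially the whole theorem.

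Second, and more seriously, finding an element with $\ell_3(e)+2\ell_4(e)\le r$ is not primarily about $4$-point lines. An element $e$ can have $\ell_3(e)>r$ with no $4$-point lines at all: this happens exactly when $\si(M/e)$ has positive corank, i.e.\ when $e$ is the tip of a spike restriction, and none of the four excluded minors locally forbids this. This is precisely why the paper's notion of a \emph{special} point covers spike tips as well as $4$-point lines, and why it needs Theorem~\ref{spikes} (a substantial result from \cite{complex}) plus vertical connectivity to convert ``every element in a large independent set satisfies $\ell_3+2\ell_4\ge r+1$'' into a contradiction with the spanning-clique analysis of Proposition~\ref{spanning clique}. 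Your phrase ``an essentially regular residue governed by Heller's theorem'' does not engage with this obstruction: the residue need not be anywhere near regular, and Heller's bound ${s+1\choose 2}$ counts points globally rather than lines through a fixed element. The structural dichotomy for $4$-point lines (common point versus bounded-rank span) is plausible --- pairwise intersection is indeed forced by excluding $\U$, and $R_9$ controls the coplanar case --- and is in the spirit of the paper's Lemmas~\ref{2 and 3}--\ref{T'_r case}, but it addresses only the easier half of the problem.
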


While we make progress on Problem \ref{open} for $\Delta = 2$, it is surprisingly difficult for $\Delta \ge 3$.
For fixed $\Delta \ge 3$, the current best upper bound on the maximum size of a simple rank-$r$ matroid in $\cM_{\Delta}$ is ${r + 1 \choose 2} + C \cdot r$ for a large constant $C$, proved by Geelen, Nelson, and Walsh~\cite{complex}.
Recently, Lee et al. \cite{LPSX} proved an upper bound of $\Delta^2{r + 1 \choose 2}$, which is the first bound that is polynomial in both $r$ and $\Delta$.
Both of these are recent additions to a long sequence of previous results; we direct the reader to \cite{Glanzer} for more details.
However, it is unclear what the correct bound should be for $\Delta \ge 3$.
It is also unclear in general what the rank-$2$ uniform excluded minor is for $\cM_{\Delta}$.
On a more positive note, it follows from a result of D'Adderio and Moci \cite[Theorem 2.2]{DM} that $\cM_{\Delta}$ is closed under duality for all positive $\Delta$.
In Theorem \ref{duality} we present an elementary proof of this result due to Marcel Celaya (private communication).
However, for $\Delta \ge 3$ it is unknown whether $\overline \cM_{\Delta} = \cM_{\Delta}$, and whether $\overline \cM_{\Delta}$ is closed under duality.
Some of the difficulties of these open problems are discussed in Section \ref{duality}.
After introducing some preliminaries in Section \ref{preliminaries}, we show in Section \ref{max matroids} that $A_r$ and $A'_r$ attain equality in Theorem \ref{main}.
The proofs of Theorems \ref{main} and \ref{main matroids} are given in Section \ref{proofs}.
A crucial case in the proof of the latter, when $M$ has a spanning-clique restriction, is treated in Section \ref{spanning-clique case}.
Prior to that, Sections \ref{1-extensions}, \ref{2-extensions}, and \ref{k-extensions} consider $1$-, $2$-, and $k$-element extensions of a clique.
\end{section}


\begin{section}{Preliminaries} \label{preliminaries}
We follow the notation of \cite{Oxley}, unless otherwise stated.
For a matroid $M$, we write $|M|$ and $\elem(M)$ for, respectively, $|E(M)|$ and the number of points of $M$.
For an integer $\ell \ge 2$, we denote by $\cU(\ell)$ the class of matroids with no $U_{2,\ell+2}$-minor.

We first describe a construction that shows how the vector matroids of $A_r$ and $A'_r$ are related to $M(K_{r+2})$.
If a matroid $M$ is obtained from a matroid $N$ by deleting a non-empty subset $T$ of $E(N)$, then $N$ is an \emph{extension} of $M$.
If $|T| = 1$, then $N$ is a \emph{single-element extension} of $M$, and if $|T| = k$ for $k \ge 2$, then $N$ is a \emph{$k$-element extension} of $M$.
The single-element extensions of a matroid $M$ are in one-to-one correspondence with the \emph{modular cuts} of $M$, which are the sets $\cF$ of flats with the following two properties:
\begin{enumerate}[(i)]
\item If $F \in \cF$ and $F'$ is a flat of $M$ containing $F$, then $F' \in \cF$.

\item If $F_1, F_2 \in \cF$ and $r_M(F_1) + r_M(F_2) = r_M(F_1 \cup F_2) + r_M(F_1 \cap F_2)$, then $F_1 \cap F_2 \in \cF$.
\end{enumerate}
A pair of flats that satisfies (ii) is a \emph{modular pair}.
A modular cut $\cF$ is \emph{proper} if it does not contain all flats of $M$.
The modular cut of $M$ corresponding to a single-element extension $N$ of $M$ by an element $e$ is the set of flats of $M$ that span $e$ in $N$.
Conversely, Crapo~\cite{Crapo} showed that, for every modular cut $\cF$ of $M$, there is a unique single-element extension $N$ of $M$ by an element $e$ for which $\cF$ is precisely the set of flats of $M$ that span $e$ in $N$ (see \cite[Theorem 7.2.3]{Oxley}).
Given a set $\cF$ of flats of $M$, the modular cut of $M$ \emph{generated} by $\cF$ is the intersection of all modular cuts of $M$ that contain every flat in $\cF$.
If $N$ is a single-element extension of $M$ by an element $e$ corresponding to the modular cut generated by a single flat $F$, then we say that $e$ has been \emph{freely added} to $F$.

A matroid $M'$ is an \emph{elementary projection} or an \emph{elementary quotient} of $M$ if there is a single-element extension $N$ of $M$ by an element $e$ so that $M' = N/e$.
Thus, every elementary projection of $M$ corresponds to a modular cut of $M$.
We prove in the following section that the vector matroids of the matrices $A_r$ and $A'_r$ are both simplifications of elementary projections of $M(K_{r+2})$. 

Two classes of matroids arise in the proof of Theorem~\ref{main matroids}: spikes and Reid geometries.
A \emph{spike} is a simple matroid $S$ with an element $t$ so that the simplification of $S/t$ is a circuit, and each parallel class of $S/t$ has size two.
The element $t$ is a \emph{tip} of $S$.
A spike $S$ may have several elements that can serve as a tip.
For example, the Fano plane is a spike, and every element can serve as a tip.
However, a spike of rank at least four has a unique tip.
We will also make use of the fact that spikes are not graphic (see~\cite[p. 662]{Oxley}).

A \emph{Reid geometry} is a simple rank-$3$ matroid $N$ consisting of long lines $L_1, L_2, L_3$ with a common intersection point $x$ so that $|L_1| = |L_2|$ and $|L_3| = 3$, while both elements in $L_3 - \{x\}$ are on $|L_1|$ long lines of $N$.
If $|L_1| = 3$, then $N$ is the Fano plane $F_7$.
If $|L_1| = 4$, then $N$ is the ternary Reid geometry $R_9$.
\end{section}


\begin{section}{The Maximum-Sized Matroids} \label{max matroids}
In this section, we describe two matroids that meet the bound of Theorem~\ref{main matroids}.
We first show that the matrices $A_r$ and $A'_r$ are in fact $2$-modular.

\begin{lemma} \label{A_r}
For each integer $r \ge 2$, the matrix $A_r$ is $2$-modular.
\end{lemma}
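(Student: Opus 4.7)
My plan is to leverage the fact that the first two blocks of $A_r$, namely $[I_r \mid D_r]$, form a totally unimodular matrix, and then to handle the third block via column operations and multilinearity of the determinant.

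First, I would observe that $[I_r \mid D_r]$ is, up to sign changes of columns, a reduced incidence matrix of $K_{r+1}$: the columns of $I_r$ correspond to edges from a fixed vertex to each of the other $r$ vertices, while the columns of $D_r$ correspond to the remaining edges of $K_{r+1}$. By the standard fact that graph incidence matrices are totally unimodular, every $r \times r$ submatrix of $[I_r \mid D_r]$ has determinant in $\{0, \pm 1\}$.

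Next, given an arbitrary $r \times r$ submatrix $B$ of $A_r$, I denote by $k$ the number of columns of $B$ that come from the third block of $A_r$; each such column has the form $e_1 + e_j$ for some $j \in \{2, \ldots, r\}$. If $k \ge 2$, I pick two such columns $e_1 + e_i$ and $e_1 + e_j$; their difference $e_j - e_i$ equals $\pm 1$ times a column of $D_r$. Subtracting one from the other preserves $\det B$ and decreases $k$ by one. Iterating, I reduce to the case $k \le 1$ while leaving all other columns of $B$ in $\pm[I_r \mid D_r]$.

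Finally, if $k = 0$, every column of the reduced matrix lies in $\pm[I_r \mid D_r]$, so $|\det B| \le 1$. If $k = 1$, with the remaining third-block column equal to $e_1 + e_i$, I use multilinearity of the determinant in this column to write $\det B = \det B_1 + \det B_2$, where $B_1$ and $B_2$ arise from $B$ by replacing $e_1 + e_i$ with $e_1$ and $e_i$ respectively. Each of $B_1, B_2$ then has all its columns in $\pm[I_r \mid D_r]$, so each determinant lies in $\{0, \pm 1\}$, giving $|\det B| \le 2$ as required. I do not anticipate a significant obstacle: the argument is short, and the only care required is to track sign flips through the column operations and to note that any degenerate case in which a new column coincides (up to sign) with an existing one of $B$ simply produces determinant zero, which is consistent with the bound.
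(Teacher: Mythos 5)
Your proof is correct, but it follows a genuinely different route from the one in the paper. You recognize $[I_r \mid D_r]$ as (up to column signs) a vertex-deleted incidence matrix of an orientation of $K_{r+1}$, hence totally unimodular, then use column operations to convert all but one of the third-block columns $e_1+e_j$ into signed columns of $D_r$, and finally split the determinant by multilinearity in the one remaining column $e_1+e_i$ into two determinants of matrices whose columns all lie in $\pm[I_r\mid D_r]$; this gives $|\det B|\le 1+1=2$. The paper instead argues by induction on $r$: in a minimal bad $r\times r$ submatrix, any row with a single nonzero entry would allow a cofactor expansion reducing to $A_{r-1}$, so every row has at least two nonzero entries, and a count against the at-most-two nonzeros per column of $A_r$ forces exactly two nonzeros in every row and column; after normalizing signs, a cofactor expansion along the one column with two equal entries again writes the determinant as a sum of two unimodular determinants. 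Both arguments bottom out in the same fact (a matrix with at most one $+1$ and one $-1$ per column is unimodular) and the same ``sum of two unimodular determinants'' conclusion, but yours is non-inductive and works directly with the global block structure of $A_r$, whereas the paper's is a local analysis of a minimal counterexample. Your handling of the degenerate case (a produced column coinciding, up to sign, with an existing one, giving determinant $0$) is the right observation to make the column-reduction step airtight; note also that since $A_r$ has exactly $r$ rows, an $r\times r$ submatrix is just a choice of $r$ full columns, which is what your description of the columns tacitly uses.
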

\begin{proof}
Let $r$ be minimal so that the lemma is false.
Then $r > 2$.
Let $A$ be an $r \times r$ submatrix of $A_r$ so that $|\det(A)| > 2$.
If $A$ has a row with exactly one nonzero entry, then the determinant of $A$ is $\pm 1$ times the determinant of an $(r-1) \times (r-1)$ submatrix of $A_{r-1}$, and so $|\det(A)| \le 2$, by the minimality of $r$.
Then each row of $A$ has at least two nonzero entries, and, since each column of $A_r$ has at most two nonzero entries, this implies that $A$ has precisely two nonzero entries in each row and column.
By permuting rows and columns of $A$ and multiplying some rows and columns of $A$ by $-1$, we may obtain a matrix 
$A'$ for which all but at most one column has nonzero entries $1$ and $-1$.
If $A'$ has no column with two entries equal to $1$, then $|\det(A')| \le 1$.
Otherwise, by expanding along a column with two ones, we see that the determinant of $A$ is the sum or difference of the determinants of two matrices with determinant in $\{-1, 0, 1\}$.
In either case, $|\det(A')| \le 2$.
But $|\det(A')| = |\det(A)|$, so this contradicts the choice of $A$.
\end{proof}

\begin{lemma} \label{A'_r}
For each integer $r \ge 2$, the matrix $A'_r$ is $2$-modular.
\end{lemma}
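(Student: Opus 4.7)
The plan is to analyze an $r \times r$ submatrix $A$ of $A'_r$ via column operations. Writing $v := e_1 + e_2$, the columns of $A'_r$ are the $e_i$, the $e_i - e_k$ (from $[I_r \mid D_r]$), the ``third-block'' columns $v - e_i$ for $3 \le i \le r$, and $v$ itself. I would first carry out three easy reductions. If $v$ is a column of $A$, subtract $v$ from each third-block column of $A$ (a column operation preserving $\det A$) to convert it into $-e_i$; the resulting matrix consists of $v$ (whose only two nonzero entries, both equal to $1$, are in rows $1$ and $2$) together with $r - 1$ columns from a totally unimodular matrix, so cofactor expansion along $v$ gives $|\det A| \le 2$. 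If $A$ contains at least two third-block columns $v - e_{i_1}, v - e_{i_2}$ but not $v$, then $(v - e_{i_1}) - (v - e_{i_2}) = e_{i_2} - e_{i_1}$ is a $D_r$-column, and this column operation preserves $\det A$ while reducing the number of third-block columns. Finally, if $A$ has no third-block column and does not contain $v$, then $A$ is a submatrix of $[I_r \mid D_r]$, hence unimodular.

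The remaining and hardest case is when $A$ has exactly one third-block column $c = v - e_j$ (for some $j \in \{3, \ldots, r\}$) and its other $r - 1$ columns come from $[I_r \mid D_r]$. Let $B$ denote the $r \times (r-1)$ matrix of these other columns. Using $c = e_1 + e_2 - e_j$, multilinearity of $\det$ in the $c$-column of $A$, and cofactor expansion along each resulting $e_i$-column, one obtains
\[ \det A = \pm\,(w_1 + w_2 - w_j), \qquad w_i := (-1)^{i+1} M_i, \]
where $M_i$ is the $(r-1) \times (r-1)$ determinant of $B$ with row $i$ deleted. Since $[I_r \mid D_r]$ is totally unimodular, each $M_i \in \{-1, 0, 1\}$, but this alone gives only the trivial bound $|\det A| \le 3$.

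Sharpening this to $|\det A| \le 2$ is the main obstacle. The key observation is that $w := (w_i)_{i=1}^r$ lies in the one-dimensional kernel of $B^T$ (if $B$ has rank less than $r - 1$, then all $M_i$ vanish and $\det A = 0$). Identifying $[I_r \mid D_r]$ with the reduced incidence matrix of the graphic matroid $M(K_{r+1})$ on vertex set $\{0, 1, \ldots, r\}$---so $e_i$ is the edge $\{0, i\}$ and $e_i - e_k$ is the edge $\{i, k\}$---the columns of $B$ correspond to the edges of a forest $H$ with $r - 1$ edges on $r + 1$ vertices, which has exactly two connected components. The kernel equations $w \cdot e_i = 0$ and $w \cdot (e_i - e_k) = 0$ force $w$ to be constant on each component of $H$ and to vanish on the component containing vertex $0$; hence $w = \alpha \, \chi_S$, where $S \subseteq \{1, \ldots, r\}$ is the vertex set of the other component, $\chi_S$ is its characteristic vector, and $\alpha \in \{-1, 0, 1\}$. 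A short case check over which of the indices $1, 2, j$ belong to $S$ yields $|w_1 + w_2 - w_j| \le 2|\alpha| \le 2$, as required.
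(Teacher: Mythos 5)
Your proof is correct, but it takes a genuinely different route from the paper's. The paper first undoes the pivot that produced $A'_r$, observing that every $r \times r$ subdeterminant of $A'_r$ equals an $(r+1)\times(r+1)$ subdeterminant of $H_{r+1} = [\,I_{r+1} \mid D_{r+1} \mid v\,]$ with $v = [1,-1,-1,0,\dots,0]^T$, and then proves by induction on $r$ that $H_r$ is $2$-modular: a violating submatrix must use the special column and have at least two nonzero entries in every row, a count of nonzero entries forces the first or second row to have exactly two nonzero entries, and expanding along that row (deleting it leaves a unimodular matrix) writes the determinant as a sum or difference of two unimodular determinants. You instead work directly with the columns of $A'_r$, use column operations to reduce to at most one column of the form $e_1+e_2-e_j$, and then combine multilinearity with the fact that the vector of signed maximal minors of the $r\times(r-1)$ network matrix $B$ spans the kernel of $B^T$ and is therefore $\pm$ the characteristic vector of the component of the associated $2$-component forest not containing the root vertex. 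Your argument avoids induction and is more structural -- in particular it identifies exactly which submatrices attain determinant $\pm 2$ (those where vertices $1$ and $2$ lie in the non-root component and $j$ does not, and the analogous configurations in your first case) -- at the cost of invoking the graphic description of $[\,I_r \mid D_r\,]$, which the paper uses only implicitly through total unimodularity. One small point worth making explicit: after your second reduction the remaining non-special columns may be negatives of columns of $[\,I_r \mid D_r\,]$, but this affects neither total unimodularity nor the kernel computation, since reorienting an edge does not change the condition that $w$ be constant on components.
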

\begin{proof}
For each integer $r \ge 3$, let $H_r = [I_{r} | D_{r} | v]$, where $v = [1, -1, -1, 0, \dots, 0]^T$.
Then $A'_r$ can be obtained from $H_{r+1}$ by pivoting on the first entry of the last column, deleting the first row and last column, and then deleting two columns.
This implies that each $r \times r$ submatrix of $A'_r$ has the same determinant as an $(r+1) \times (r+1)$ submatrix of $H_{r+1}$, so it suffices to show that $H_r$ is $2$-modular for each $r \ge 3$.

Let $A = [A' | v]$ be an $r \times r$ submatrix of $H_r$.
Let $A_1 = [A' | e_1 - e_2]$ and $A_2 = [A' | e_3]$, where $e_i$ is the $i^{th}$ unit vector.
Then $|\det(A_1)| \le 1$ and $|\det(A_2)| \le 1$, since each is a submatrix of $[I_r | D_r]$.
Since the determinant is linear in the last column and $v = (e_1 - e_2) - e_3$, we have $\det(A) = \det(A_1) - \det(A_2)$, so $|\det(A)| \le 2$.
\end{proof}

The vector matroids of the matrices $A_r$ and $A'_r$ are both simplifications of elementary projections of $M(K_{r+2})$. 
For each integer $r \ge 2$, the matroid $T_r$ is the simplification of the matroid obtained by projecting $M(K_{r+2})$ by an element freely added to a $3$-point line.
The matrix $A_r$ is a well-known representation of $T_r$ \cite{OVW}, so $T_r$ is a $2$-modular matroid by Lemma~\ref{A_r}.
Note that $|T_r| = |M(K_{r+2})| - 2 = {r + 2 \choose 2} - 2$.
We now define another single-element extension of $M(K_{r+2})$, by its corresponding modular cut.


\begin{proposition} \label{T'_r}
Let $L_1$ and $L_2$ be $2$-point lines of $M(K_{r+2})$ so that $L_1 \cup L_2$ is a $4$-element circuit. 
Then the set of flats of $M(K_{r+2})$ that contain $L_1$ or $L_2$ is a proper modular cut. 
Moreover, the corresponding single-element extension of $M(K_{r+2})$ is represented over both $\bR$ and $\GF(3)$ by the matrix $[ I_{r+1} | D_{r+1} | v ]$, where $v = [1, -1, -1, 0, \dots, 0]^T$.
\end{proposition}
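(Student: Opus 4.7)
The plan is to verify that $\cF$ is a proper modular cut using the correspondence between flats of $M(K_{r+2})$ and partitions of the vertex set, and then to match the matrix against the extension supplied by Crapo's theorem. Label the vertices of $K_{r+2}$ as $1, 2, \ldots, r+2$ so that each edge is a $2$-element subset. Because $L_1 \cup L_2$ is a $4$-element circuit, it is the edge set of a $4$-cycle on four vertices, which after relabeling are $1, 2, 3, 4$, with $L_1 = \{\{1,2\},\{3,4\}\}$ and $L_2 = \{\{1,3\},\{2,4\}\}$ the two vertex-disjoint pairings of that cycle. Write $\pi(F)$ for the partition of $\{1, \ldots, r+2\}$ associated to a flat $F$; then $F \supseteq L_1$ is equivalent to $1 \sim 2$ and $3 \sim 4$ in $\pi(F)$, with the analogous statement for $L_2$. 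Upward closure of $\cF$ and properness are immediate.

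For closure under intersection of modular pairs, the only non-trivial case is $F_1 \supseteq L_1$ with $F_1 \not\supseteq L_2$, and $F_2 \supseteq L_2$ with $F_2 \not\supseteq L_1$; the plan is to show such a pair cannot be modular. In this case $\pi(F_1)$ has distinct blocks $X_1 \supseteq \{1, 2\}$ and $X_2 \supseteq \{3, 4\}$, while $\pi(F_2)$ has distinct blocks $Y_1 \supseteq \{1, 3\}$ and $Y_2 \supseteq \{2, 4\}$. I would then use the standard characterization of modular pairs in the partition lattice: letting $H$ be the bipartite block-intersection graph on $\pi(F_1) \sqcup \pi(F_2)$ (edge whenever two blocks intersect), the identifications $|V(H)| = |\pi(F_1)| + |\pi(F_2)|$, $|E(H)| = |\pi(F_1) \wedge \pi(F_2)|$, and (components of $H$) $= |\pi(F_1) \vee \pi(F_2)|$ show that $(F_1, F_2)$ is a modular pair exactly when $H$ is a forest. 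But the four edges $X_iY_j$, certified by the vertices $1, 2, 3, 4$, form a $4$-cycle in $H$, so $H$ is not a forest.

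For the matrix assertion, identify the columns of $[I_{r+1} \mid D_{r+1}]$ with the edges of $K_{r+2}$: for $1 \le i \le r+1$ the edge $\{i, r+2\}$ maps to $e_i$, and for $1 \le i < j \le r+1$ the edge $\{i, j\}$ maps to $e_i - e_j$, where $e_i$ denotes the $i$-th standard basis vector of $\bR^{r+1}$. Then the identities $v = (e_1 - e_2) - e_3 = (e_1 - e_3) - e_2$ place $v$ in the column span of $L_1^* = \{\{1,2\}, \{3, r+2\}\}$ and of $L_2^* = \{\{1,3\}, \{2, r+2\}\}$, whose union is the $4$-cycle on $\{1, 2, 3, r+2\}$; so by the first part, the set $\cF^*$ of flats containing $L_1^*$ or $L_2^*$ is a proper modular cut, and by Crapo's theorem the corresponding extension is uniquely determined. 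It therefore suffices to show that the modular cut of the matrix-extension equals $\cF^*$. One containment is immediate from the two decompositions above. For the reverse, I would decompose the column span of a flat $F$ as a direct sum over blocks $B$ of $\pi(F)$: the summand at $B$ is the sum-zero hyperplane of $\bR^{B \cap \{1, \ldots, r+1\}}$ when $r+2 \notin B$, and the full subspace otherwise. Since $v$ is supported on $\{1, 2, 3\}$ with entries $1, -1, -1$, a finite check on the seven nonempty subsets $S \subseteq \{1, 2, 3\}$ shows $\sum_{i \in S} v_i = 0$ only for $S \in \{\{1, 2\}, \{1, 3\}\}$, which forces $\pi(F)|_{\{1, 2, 3, r+2\}}$ to contain one of the two pairings defining $L_1^*$ or $L_2^*$. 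The arithmetic uses only values in $\{0, \pm 1\}$ and the same sum-zero conditions hold modulo $3$, so the matrix represents this extension over $\GF(3)$ as well.

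The step I expect to be most delicate is the reverse containment in the previous paragraph. One could bypass it by appealing to Crapo's uniqueness together with dimension considerations, but the explicit block-decomposition argument is self-contained and reduces to a finite enumeration of subset sums on the three nonzero entries of $v$.
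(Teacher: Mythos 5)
Your proof is correct, but it takes a genuinely different route from the paper's. The paper never verifies the modular cut axioms directly: it first proves the matrix $H_{r+1}=[I_{r+1}\mid D_{r+1}\mid v]$ is $2$-modular (via Lemma~\ref{A'_r}, which also gives the $\bR$/$\GF(3)$ agreement), lets $M$ be its vector matroid, and then identifies the set of flats of $M\del e$ spanning $e$ by intersecting an arbitrary such flat with the rank-$3$ flat $F_0$ spanned by the $4$-circuit and observing that $\{a,c,e\}$ and $\{b,d,e\}$ are the only long lines of $M|(F_0\cup\{e\})$ through $e$; the modular-cut property and properness then come for free from Crapo's correspondence. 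You instead verify the modular cut axioms from scratch in the partition lattice, using the forest criterion for modular pairs of flats of $M(K_n)$ to kill the one nontrivial case (the blocks $X_1,X_2,Y_1,Y_2$ certified by the vertices $1,2,3,4$ do form a $4$-cycle in the block-intersection graph, and your distinctness claims $X_1\ne X_2$, $Y_1\ne Y_2$ follow correctly from $F_1\not\supseteq L_2$, $F_2\not\supseteq L_1$), and you then compute the extension's modular cut by decomposing the column space of a flat blockwise and running the subset-sum check on $\{1,2,3\}$. Your approach is self-contained and avoids the $2$-modularity machinery entirely, at the cost of invoking (or proving) the forest characterization; the paper's is shorter given its earlier lemmas and stays in matroid language. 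One small imprecision: the subset sums are not all in $\{0,\pm1\}$ (the set $\{2,3\}$ sums to $-2$), but since none of the nonzero sums is divisible by $3$, your conclusion that the same two subsets are the only zero-sum ones over $\GF(3)$ still stands.
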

\begin{proof}
Note that the matrix $H_{r+1} = [I_{r+1} | D_{r+1} | v]$ is a submatrix of $A'_{r+1}$, and is thus $2$-modular, by Lemma~\ref{A'_r}.
Therefore, the vector matroids of $H_{r+1}$ over $\bR$ and $\GF(3)$ are isomorphic; let $M$ denote this matroid.
In the matroid $M$, let $e$ label the last column of $H_{r+1}$.
Then $M\del e$ is $M(K_{r+2})$.
Let $\cF$ be the set of flats of $M\del e$ that span $e$ in $M$.
Note that $\cF$ contains no $1$-element flat, because $M$ is simple.
Let $a,b,c,d$ be the labels of the columns $[1, -1, 0, \dots, 0]^T$, $[0, 1, 0, \dots, 0]^T$, $[0, 0, 1, 0, \dots, 0]^T$, and $[1, 0, -1, 0, \dots, 0]^T$, respectively, of $H_{r+1}$.
Then $\{a, c, e\}$, $\{b, d, e\}$, and $\{a,b,c,d\}$ are circuits of $M$, and so $\{a,c\}$ and $\{b,d\}$ are flats of $M(K_{r+2})$ in $\cF$.
Let $F_0$ denote the $6$-element flat of $M(K_{r+2})$ spanned by $\{a,b,c,d\}$; then $M(K_{r+2})|F_0 \cong M(K_4)$.
It is straightforward to check that $\{a,c,e\}$ and $\{b,d,e\}$ are the only long lines of $M|(F_0 \cup \{e\})$ that contain $e$.

Suppose that there is a flat $F \in \cF$ that does not contain $\{a,c\}$ or $\{b,d\}$.
Since $F \in \cF$ and $F_0 \in \cF$, the flat $L = F \cap F_0$ is in $\cF$ as well, by property (ii) of modular cuts.
This flat $L$ has rank less than three, since it does not contain $\{a,c\}$ or $\{b,d\}$.
It has size at least two, since $M$ is simple.
But $\{a,c,e\}$ and $\{b,d,e\}$ are the only long lines of $M|(F_0 \cup \{e\})$ that contain $e$, which contradicts the existence of $F$.
Thus, $\cF$ is precisely the set of flats that contain $\{a,c\}$ or $\{b,d\}$.
Since $\cF$ is proper, the proposition holds.
\end{proof}


For each integer $r \ge 2$, let $T'_r$ denote the simplification of the matroid obtained by projecting $M(K_{r+2})$ by an element corresponding to the modular cut of Proposition~\ref{T'_r}.
It is not hard to see that the matrix $A'_r$ can be obtained from the real matrix $[I_{r+1} | D_{r+1} | v]$ by pivoting on the first entry of the last column, deleting the first row and last column, and then deleting one column from each of the two parallel pairs.
Thus, $A'_r$ represents $T'_r$ over $\bR$, and so $T'_r$ is a $2$-modular matroid by Lemma~\ref{A'_r}.
Note that $|T'_r| = |M(K_{r+2})| - 2 = {r + 2 \choose 2} - 2$.
The matroids $T_r$ and $T'_r$ are likely the unique simple rank-$r$ $2$-modular matroids of size ${r + 2 \choose 2} - 2$ for $r \ge 5$, but this seems difficult to prove.
\end{section}


\begin{section}{Duality and Excluded Minors} \label{duality}
In this section, we find several excluded minors for $\cM_2$, show that $\overline \cM_2 = \cM_2$, and present a proof showing that $\cM_{\Delta}$ is closed under duality for each positive integer $\Delta$.
We make use of the following result.

\begin{lemma} \label{pivot}
Let $A$ be a $2$-modular representation of a rank-$r$ matroid $M$, and let $e$ be a non-loop of $M$ that labels the first column of $A$.
Then there is a $2$-modular matrix $A'$ of the form $[I_r | X]$ whose first column is labeled by $e$ such that $A'$ is obtained from $A$ by elementary row operations, column swaps, and dividing rows by two.
\end{lemma}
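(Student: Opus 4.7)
The plan is to find a basis of columns of $A$ that contains $e$, swap it to the front of $A$ by column swaps, and then use elementary row operations to reduce the resulting $r \times r$ block to the identity matrix.

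Since $e$ is a non-loop, the column of $A$ labeled $e$ is nonzero and hence extends to a basis of the column space of $A$. I would choose $r-1$ other columns $c_2,\dots,c_r$ so that $B := [e \mid c_2 \mid \cdots \mid c_r]$ is an invertible $r \times r$ submatrix of $A$. Among all such choices, I would select one that minimizes $|\det B|$; by $2$-modularity, this minimum lies in $\{1,2\}$. Using column swaps that preserve the position of $e$, I rearrange so that $A = [B \mid Y]$. Then applying $B^{-1}$ on the left (a product of elementary row operations over $\bR$) yields $A' = B^{-1} A = [I_r \mid B^{-1} Y]$, whose first column is labeled by $e$.

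To verify that $A'$ is $2$-modular, I would use the fact that any $r \times r$ minor of $A'$ equals $\det(C)/\det(B)$ for some $r \times r$ submatrix $C$ of $A$, and therefore has absolute value at most $2/|\det B| \le 2$. In particular, by Cramer's rule, each entry of $B^{-1}Y$ equals $\det(B^{(i)})/\det(B)$, where $B^{(i)}$ is another $r \times r$ submatrix of $A$; hence every entry of $A'$ is bounded in absolute value by $2$.

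The main obstacle is showing that $A'$ is integer-valued when $|\det B|=2$, since each entry $\det(B^{(i)})/\det(B)$ is then a priori only a half-integer. For entries lying in rows $i \ge 2$, the submatrix $B^{(i)}$ still contains the column $e$, so by the minimality of $|\det B|$ among bases containing $e$ we must have $|\det B^{(i)}| \in \{0,2\}$, which is even, and the corresponding entry is an integer. The delicate case is the first row, in which $e$ itself has been replaced in $B^{(i)}$; here I expect the argument to be completed via a basis-exchange step that converts a hypothetical unimodular $B^{(1)}$ into a unimodular basis containing $e$, contradicting the minimality of $|\det B|$, together with the parity constraints that $2$-modularity places on the minors of $A$. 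This final step is the crux of the proof and uses essentially that $\Delta = 2$.
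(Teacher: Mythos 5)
Your reduction to the single troublesome case is accurate, but the step you defer is precisely where the argument cannot be completed, and the obstruction is not one that a basis-exchange or parity argument will remove. Consider
\[
A = \begin{bmatrix} 2 & 0 & 1 \\ 0 & 1 & 1 \end{bmatrix},
\]
with $e$ labelling the first column. This is a $2$-modular matrix of rank $2$ representing $U_{2,3}$; both bases containing $e$ have determinant $2$, while the basis on the last two columns has determinant $-1$. So your minimizing basis $B$ consists of the first two columns, $|\det B|=2$, and by Cramer's rule the first-row entry of $B^{-1}A$ in the last column is $\pm 1/2$. No contradiction is available: the ``hypothetical unimodular $B^{(1)}$'' really occurs, and exchanging it back produces only a unimodular basis \emph{not} containing $e$, which does not violate your minimality condition. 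Worse, the failure is not specific to your choice of $B$: any row-operation matrix $P$ with $Pe=(1,0)^T$ sends the difference of the last two columns, which equals $\tfrac12 e$, to $(\tfrac12,0)^T$, so the images of those two columns can never both be integral. Hence for this $A$ no integer matrix of the required form exists at all, and your construction cannot be repaired by a cleverer choice of basis.

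This also shows that your approach diverges essentially from the paper's. The paper does not invert a basis fixed in advance; it inducts on $r$, building the identity one row at a time: it reduces the first $r-1$ rows, clears the new row against the resulting $I_{r-1}$, and then either pivots on a residual entry equal to $\pm1$ or first divides the whole row by two when every residual entry is even. The adaptive choice of pivot column and the division-by-two step are exactly the tools your fixed-basis scheme lacks. But the example above satisfies every hypothesis of the lemma as stated, so the difficulty you isolated is intrinsic to the statement at this level of generality (some normalization guaranteeing that column $e$ lies in a unimodular basis --- as happens, for instance, when $A$ is already of the form $[I_r\mid X']$ --- is being used implicitly); in particular, your proof as written has a genuine gap that cannot be closed without importing such an additional hypothesis.
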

\begin{proof}
We may assume that $e$ labels the first column of $A$, that $A$ has $r$ rows, and that $r \ge 2$.
Let $B$ be a basis of $M$ containing $e$, and let $A_B$ be the corresponding submatrix of $A$. 
By performing elementary row operations, we may transform $A$ into another $2$-modular matrix in which  $A_B$ is in Hermite normal form.
If each entry on the diagonal of $A_B$ is $1$, then the lemma holds.
Otherwise, since $A$ is $2$-modular, exactly one entry on the diagonal of $A_B$ is equal to $2$ and all the rest are equal to $1$.
We may assume that the row of $A$ with the entry $2$ on the diagonal of $A_B$ has an entry in $\{-1, 1\}$, or else we may divide this row by $2$.
By pivoting on that entry and swapping columns, we obtain a $2$-modular matrix of the form $[I_r | X]$, where the first column is labeled by $e$.
\end{proof}

Since row operations do not change the vector matroid, Lemma~\ref{pivot} implies that $\cM_2$ is closed under duality (see~\cite[Theorem 2.2.8]{Oxley}).
Also, since every $\Delta$-modular matrix of the form $[I_r | X]$ is in fact totally $\Delta$-modular, Lemma~\ref{pivot} implies that $\cM_2 = \overline \cM_2$.
Unfortunately, the analogue of Lemma~\ref{pivot} is false for $\Delta$-modular matrices with $\Delta \ge 3$.
For example, the matrix 
$$\begin{bmatrix}
1 & 1 & 1  \\
0 & 2 & 3  \\
\end{bmatrix}$$
with the first column labeled by $e$ is $3$-modular, but is not row-equivalent to a matrix with an $I_2$-submatrix that uses the first column.
This is the main difficulty in showing that $\overline \cM_{\Delta} = \cM_{\Delta}$ for $\Delta \ge 3$.


Next we present a proof showing that $\cM_{\Delta}$ is closed under duality.
This follows from a result of D'Adderio and Moci concerning arithmetic matroids \cite[Theorem 2.2]{DM}, but we present an elementary proof due to Marcel Celaya (private communication).
We also thank the anonymous referee for informing us that this result is in fact known.

We first develop some notation. 
Let $A \in \bZ^{m \times n}$ be a matrix with columns indexed by a set $E$.
For every $X \subseteq E$, we write $A_X$ for the submatrix of $A$ consisting of the columns indexed by $X$.
If $A$ has full row-rank, then we write $\gcd(A)$ for the greatest common divisor of the determinants of the $m \times m$ submatrices of $A$.
We write $[n]$ for $\{1,2,\dots, n\}$.

\begin{theorem}[D'Adderio, Moci] \label{duality}
For each positive integer $\Delta$, the class $\cM_{\Delta}$ is closed under duality.
\end{theorem}
\begin{proof}
Let $M$ be a $\Delta$-modular matroid with ground set $[n]$, and let $A \in \bZ^{m \times n}$ be a $\Delta$-modular representation of $M$ with full row rank.
We first show that there is a matrix $C \in \bZ^{(n-m) \times n}$ with $\gcd(C) = 1$ whose rows form a basis of $\ker(A)$.
Let $C_1 \in \bZ^{(n-m) \times n}$ be a matrix whose rows form a basis of $\ker(A)$.
Let $P \in \bZ^{(n - m) \times (n - m)}$ and $Q \in \bZ^{n \times n}$ be unimodular matrices so that $PC_1Q$ is the Smith normal form of $C_1$.
Then $PC_1Q = [D|0]$, where $D \in \bZ^{(n - m) \times (n - m)}$ is an invertible diagonal matrix.
Then $D^{-1}PC_1Q = [I|0]$ is an integer matrix with $\gcd(D^{-1}PC_1Q) = 1$.
Since $Q$ is unimodular, so is $Q^{-1}$, and therefore $(D^{-1}PC_1Q)Q^{-1} = D^{-1}PC_1$ is an integer matrix.
Clearly the Smith normal form of $D^{-1}PC_1$ is $D^{-1}PC_1Q = [I|0]$. 
Since a matrix $B$ with full row-rank has $\gcd(B) = 1$ if and only if the Smith normal form of $B$ is $[I|0]$, it follows that $\gcd(D^{-1}PC_1) = 1$.
Therefore, $D^{-1}PC_1$ is an integer matrix row-equivalent to $C_1$ with $\gcd(D^{-1}PC_1) = 1$, so we may take $C = D^{-1}PC_1$.

We will show that $C$ is a $\Delta$-modular representation of $M^*$.
For each set $X \subseteq [n]$, we write $\bar X$ for $[n] - X$.
Note that $A$ and $C$ both have columns indexed by $[n]$, and that $B \subseteq [n]$ is a basis of $M$ if and only if $\bar B$ is a basis of $M^*$.
By rearranging columns, we may assume that $B = [m]$ is a basis of $M$.
Then $A_B^{-1}A = [I_m | A_B^{-1}A_{\bar B}]$.
Since $A_B^{-1}A$ is row-equivalent to $A$, it represents $M$ over $\mathbb Q$.
Let $C' = [(A_B^{-1}A_{\bar B})^T| -I_{n - m}]$.
Then by Theorem 2.2.8 in \cite{Oxley}, $C'$ represents $M^*$ over $\mathbb Q$.
Also, the rows of $C'$ form a basis of $\ker(A_B^{-1}A) = \ker(A)$, so $C'$ is row-equivalent to $C$, and thus $C$ represents $M^*$ as well.
Moreover, since $C'$ and $C$ are row-equivalent, there exists an invertible rational $(n - m) \times (n - m)$ matrix $Q$ such that $C' = QC$.
Then each set $B_1 \subseteq E(M)$ of size $m$ satisfies $\det(C'_{\bar B_1}) = \det(Q)\det(C_{\bar B_1})$.

Let $B_0$ be a basis of $M$ obtained from $B$ by a basis exchange, so $B_0 = (B \cup \{j\}) - \{i\}$ for some $i \in B$ and $j \notin B$.
Then the matrix $C'_{\bar B_0}$ is obtained from $-I_{n - m}$ by replacing the $(j - m)^{th}$ column with the $i^{th}$ column of $(A_B^{-1}A_{\bar B})^T$,  so 
\begin{align}
\det(C'_{\bar B_0}) &=  (-1)^{n - m - 1}(A_B^{-1}A_{\bar B})^T_{j-m,i} \\
&= (-1)^{n - m - 1}(A_B^{-1}A_{\bar B})_{i,j-m} \\
&=  (-1)^{n - m - 1}(A_B^{-1}A)_{i,j}.
\end{align}
Furthermore, the system $(A_B) x = A_{\{j\}}$ has unique solution $A_B^{-1}A_{\{j\}} = (A_B^{-1}A)_{\{j\}}$, so by Cramer's rule, the $i^{th}$ entry of the solution $(A_B^{-1}A)_{\{j\}}$ is 
\begin{align*}
(A_B^{-1}A)_{i,j} = \frac{\det(A_{B_0})}{\det(A_B)}.
\end{align*}
Combined with (3), this implies that 
\begin{align}
\det(C'_{\bar B_0 }) = (-1)^{n - m - 1} \frac{\det(A_{B_0})}{\det(A_B)}.
\end{align}
Since $C' = QC$ and $\det(C'_{\bar B}) = (-1)^{n - m}$, (4) implies that 
\begin{align}
\frac{\det(C_{\bar B_0 })}{\det(C_{\bar B})} = \frac{\det(C'_{\bar B_0})}{\det(C'_{\bar B})} = -\frac{\det(A_{B_0})}{\det(A_B)}.
\end{align}
Since every basis of $M$ can be obtained from any other basis by a sequence of basis exchanges, (5) implies that each pair $(B_1, B_2)$ of bases of $M$ satisfies 
 \begin{align}
 \frac{\det(C_{\bar B_1})}{\det(C_{\bar B_2})} =  \pm\frac{\det(A_{B_1})}{\det(A_{B_2})}.
 \end{align}

Suppose that some basis $B_1$ of $M$ satisfies $\det(C_{\bar B_1}) =c \cdot \det(A_{B_1})$ for some $c \in \mathbb Q$ with $|c| > 1$.
Then (6) implies that each other basis $B_2$ of $M$ satisfies $\det(C_{\bar B_2}) = \pm c \cdot \det(A_{B_2})$.
But then $c \cdot \gcd(A)$ is an integer with absolute value greater than $1$ which divides $\gcd(C)$, a contradiction.
Thus, each basis $B$ of $M$ satisfies $|\det(C_{\bar B})| \le |\det(A_B)| \le \Delta$, so $C$ is $\Delta$-modular.
\setcounter{equation}{0}
\end{proof}


We now turn our attention back to $\cM_2$.
To find excluded minors for $\cM_2$, we can use the fact that every matroid in $\cM_2$ is dyadic.
The excluded minors for the class of dyadic matroids include $U_{2,5}$, $F_7$, $\AG(2,3)\del e$, $\Delta(\AG(2,3)\del e)$, $T_8$, and their duals.
Here $\Delta(\AG(2,3)\del e)$ is obtained from $\AG(2,3)\del e$ by a $\Delta$-$Y$ exchange, while $T_8$ is a self-dual ternary spike with the tip deleted \cite[p.~649]{Oxley}.
In fact, each of these matroids is an excluded minor for $\cM_2$.
The proof, which we omit, amounts to finding a $2$-modular representation for each single-element deletion or contraction of each of these matroids.
Also, since the ternary Reid geometry $R_9$ has $\AG(2,3)\del e$ as a restriction, $R_9$ is not in $\cM_2$.
There are two other known excluded minors for the class of dyadic matroids, namely $N_1$ and $N_2$, and the question of whether this list is complete is a long-standing open problem (see \cite[Problem 14.7.11]{Oxley}).

\begin{problem} \label{dyadic}
Determine if $$\{U_{2,5}, U_{3,5}, F_7, F_7^*, \AG(2,3)\del e, (\AG(2,3)\del e)^*, \Delta(\AG(2,3)\del e), T_8, N_1, N_2\}$$ is the complete list of excluded minors for the class of dyadic matroids.
\end{problem}

The matroid $N_2$ may be an excluded minor for $\cM_2$, but $N_1$ is not, as we shall see shortly.
To find excluded minors for $\cM_2$ that are dyadic, we use Lemma~\ref{pivot}.

\begin{proposition} \label{U}
$\U$ is an excluded minor for $\cM_2$.
\end{proposition}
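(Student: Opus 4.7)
The plan is to show that $\U$ is not in $\cM_2$ while every proper minor is. The latter direction is straightforward: every single-element deletion of $\U$ is isomorphic to $U_{2,3}\oplus U_{2,4}$ and every single-element contraction to $U_{1,3}\oplus U_{2,4}$. Using the unimodular representations of $U_{1,3}$ and $U_{2,3}$ together with the $2$-modular representation $\begin{bmatrix}1&0&1&1\\0&1&1&2\end{bmatrix}$ of $U_{2,4}$, the corresponding block-diagonal matrix is $2$-modular, because every full-rank minor of a block-diagonal matrix factors as a product of one minor from each block and so has absolute value at most $1\cdot 2=2$. Combined with the minor-closedness of $\cM_2$, this gives that every proper minor of $\U$ lies in $\cM_2$.

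For the other direction, suppose for contradiction that $\U$ has a $2$-modular representation. By Lemma~\ref{pivot} and the remark that $\cM_2 = \overline \cM_2$, we may take this representation to be a totally $2$-modular matrix $A = [I_4 \,|\, X]$. Every basis of $\U = M_1 \oplus M_2$ consists of two elements from each rank-$2$ component, so, after a column permutation, the columns of $I_4$ are two basis elements of $M_1$ followed by two of $M_2$. Because each non-basis column of a component lies in the span of the basis columns of that component, $X$ must be block-diagonal,
$$X = \begin{pmatrix}X_1 & 0 \\ 0 & X_2\end{pmatrix},$$
with each $X_i$ a $2\times 2$ matrix such that $[I_2 \,|\, X_i]$ represents $U_{2,4}$. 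In particular every entry of each $X_i$ is a nonzero integer of absolute value at most $2$, and $\det X_i$ is a nonzero integer of absolute value at most $2$.

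Three minor inequalities then drive the contradiction. First, any $2\times 2$ submatrix of $A$ using one row and one column from each block has determinant of the form $uv$ with $u$ an entry of $X_1$ and $v$ an entry of $X_2$, so $|uv|\le 2$. Hence either all entries of $X_1$ or all entries of $X_2$ lie in $\{\pm 1\}$; by symmetry, say all entries of $X_1$ do. Then $\det X_1$ is a nonzero element of $\{-2,0,2\}$, so $|\det X_1|=2$. Second, each $3\times 3$ submatrix using rows $1$ and $2$ together with one of rows $3,4$, and both columns of $X_1$ together with a single column of $X_2$, has determinant $\pm v\det X_1 = \pm 2v$ with $v$ the corresponding entry of $X_2$; total $2$-modularity therefore forces every entry of $X_2$ to have absolute value $1$, so $|\det X_2|=2$. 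But then the $4\times 4$ submatrix on the four columns of $X$ has determinant $(\det X_1)(\det X_2)$ of absolute value $4$, contradicting $2$-modularity.

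The main obstacle is the structural step in the second paragraph: combining Lemma~\ref{pivot} with the rank decomposition of $\U$ to force $X$ into block-diagonal form with $U_{2,4}$-representing blocks. Once that reduction is in place, the contradiction is extracted from the short chain of minor estimates above.
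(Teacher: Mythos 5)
Your proof is correct and follows essentially the same route as the paper: apply Lemma~\ref{pivot} to obtain a representation $[I_4\,|\,X]$ with $X$ block-diagonal, and derive a $4\times 4$ submatrix of determinant $\pm4$; the paper gets the factor of $2$ from each block in one line from the non-regularity of $U_{2,4}$, where you chase the entries explicitly, and it handles the deletions via self-duality rather than by exhibiting representations, but these are only cosmetic differences.
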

\begin{proof}
Suppose $A$ is a $2$-modular representation of $\U$ with four rows.
By Lemma~\ref{pivot}, we may assume that $A$ has an $I_4$-submatrix.
Since $U_{2,4}$ is not regular, this implies that $A$ has a $4 \times 4$ block-diagonal submatrix where the determinant of each nonzero block has absolute value at least two.
But then the determinant of this submatrix has absolute value at least four, a contradiction, so $\U \notin \cM_2$.
Each single-element contraction of $\U$ is isomorphic to $U_{1,3} \oplus U_{2,4}$, and it is easy to see that this matroid is in $\cM_2$.
Since $\U$ is self-dual, this implies that every minor of $\U$ is in $\cM_2$.
\end{proof}

The matroid $\U$ is signed-graphic, which means that it has a representation over $\GF(3)$ so that each column has at most two nonzero entries.
In fact, $\cM_2$ has at least two other signed-graphic excluded minors, represented over $\bR$ by the following matrices:
$$\begin{bmatrix}
1 & 0 & 1 & 1 & 0 & 0 & 1 & 0 \\
0 & 1 & 1 & -1 & 0 & 0 & 0 & 1 \\
0 & 0 & 0 & 0 & 1 & 1 & 0 & -1 \\
0 & 0 & 0 & 0 & 1 & -1 & -1 & 0
\end{bmatrix}, \begin{bmatrix}
1 & 0 & 1 & 1 & 0 & 0 & 0 & 0 \\
0 & 1 & 1 & -1 & 0 & 0 & 0 & 1 \\
0 & 0 & 0 & 0 & 1 & 1 & 0 & -1 \\
0 & 0 & 0 & 0 & 1 & -1 & 1 & 0
\end{bmatrix}$$
We write $U_8$ for the vector matroid of the first matrix, and $U_8'$ for the vector matroid of the second matrix.
Both matroids are self-dual.
One can prove that they are excluded minors for $\cM_2$ using a similar proof to that of Proposition~\ref{U}.
The excluded minor $N_1$ for the dyadic matroids has a $U_8$-minor, but $N_2$ does not have a minor isomorphic to $\U$, $U_8$, or $U'_8$, and thus it may be an excluded minor for $\cM_2$.

Finding the excluded minors for $\cM_{\Delta}$ likely becomes more difficult as $\Delta$ increases.
Indeed, we have not yet solved the following.

\begin{problem} \label{rank-2}
Find the unique rank-$2$ excluded minor for $\cM_{\Delta}$.
\end{problem}

Since $\cM_{\Delta}$ is closed under parallel extension and adding loops, each excluded minor for $\cM_{\Delta}$ is simple.
Since every rank-$2$ simple matroid is uniform, this implies that $\cM_{\Delta}$ has a unique rank-$2$ excluded minor, which must be uniform.
If $p$ is the smallest prime greater than $\Delta$, then $U_{2, p+2} \notin \cM_{\Delta}$, because every matroid in $\cM_{\Delta}$ is $\GF(p)$-representable.
Also, it is not hard to see that $U_{2, \Delta + 2} \in \cM_{\Delta}$ for each $\Delta \ge 1$, by taking the representation with two rows, two unit columns, and all columns of the form $[1, m]^T$, where $1 \le m \le \Delta$.
This implies that if $\Delta + 1$ is prime, then $U_{2, \Delta + 3}$ is an excluded minor for $\cM_{\Delta}$.
It is natural to conjecture that $U_{2, p + 2}$ is always an excluded minor for $\cM_{\Delta}$, where $p$ is the smallest prime greater than $\Delta$.
However, it appears (through Sage code) that the rank-$2$ excluded minor for $\cM_7$ is $U_{2, 11}$, even though nine is not prime.
Thus, it is unclear what the correct answer to Problem \ref{rank-2} may be as a function of $\Delta$.
\end{section}


\begin{section}{Single-Element Extensions} \label{1-extensions}
The remainder of this paper is devoted to the proof of Theorem~\ref{main matroids}.
In this section, we show that, up to isomorphism, there are only two rank-$r$ non-trivial single-element extensions of a clique that have no minor isomorphic to $U_{2,5}$, $F_7$, or $R_9$. 
We need the following straightforward lemma to identify $R_9$.

\begin{lemma} \label{R}
Let $M$ be a simple rank-$3$ matroid on nine elements consisting of three long lines through a common point. 
Then either $M$ has a $U_{2,5}$-minor, or 
$M \cong R_9$.
\end{lemma}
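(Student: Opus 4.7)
The plan is to split on the sizes of $L_1, L_2, L_3$. Since any two distinct lines of a rank-$3$ matroid meet in at most one point and the three lines already share $x$, inclusion-exclusion gives $|L_1|+|L_2|+|L_3|=11$. If some $|L_i|\ge 5$, then $M|L_i\cong U_{2,|L_i|}$ has $U_{2,5}$ as a restriction, so $M$ has a $U_{2,5}$-minor. Otherwise each $|L_i|\in\{3,4\}$, and since these sum to $11$ the multiset of sizes must be $\{4,4,3\}$. I would fix notation so that $|L_1|=|L_2|=4$ and $|L_3|=3$, writing $L_1\setminus\{x\}=\{a_1,a_2,a_3\}$, $L_2\setminus\{x\}=\{b_1,b_2,b_3\}$, and $L_3=\{x,c_1,c_2\}$.

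Next I would analyze $\si(M/c_1)$. Two elements $u,v\in E(M)\setminus\{c_1\}$ become parallel in $M/c_1$ exactly when $\{c_1,u,v\}$ is a triangle of $M$. The line $L_3$ makes $x$ and $c_2$ parallel. For $i\ne j$, the unique line through $a_i$ and $a_j$ is $L_1$, which does not contain $c_1$; the same reasoning rules out parallel pairs of the form $\{b_i,b_j\}$, $\{x,a_i\}$, or $\{x,b_j\}$. So the only additional collapses come from triangles $\{c_1,a_i,b_j\}$, and any long line of $M$ through $c_1$ distinct from $L_3$ must be such a triangle, since it can share at most one point with each of $L_1$ and $L_2$. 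Because two points determine a line, no two such triangles share an $a_i$ or a $b_j$, so the triangles through $c_1$ induce a partial matching between $\{a_1,a_2,a_3\}$ and $\{b_1,b_2,b_3\}$. If $t$ is their number, then $|\si(M/c_1)|=1+(6-t)=7-t$, and avoiding a $U_{2,5}$-minor forces $7-t\le 4$; combined with $t\le 3$, this gives $t=3$. Hence $c_1$ lies on exactly four long lines of $M$, and the same argument applied to $c_2$ yields the analogous conclusion.

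At this stage $M$ satisfies all the defining conditions of $R_9$ recorded in the preliminaries, and the final task is to confirm that these conditions determine the matroid up to isomorphism. The triangles through $c_1$ and through $c_2$ give two bijections $\pi_1,\pi_2\colon\{1,2,3\}\to\{1,2,3\}$ defined by letting $\{c_k,a_i,b_{\pi_k(i)}\}$ be a triangle. If $\pi_1(i)=\pi_2(i)=j$ for some $i$, then $\{c_1,a_i,b_j\}$ and $\{c_2,a_i,b_j\}$ would be two distinct long lines sharing two points, which is impossible; hence $\pi_2\pi_1^{-1}$ is a derangement of $\{1,2,3\}$ and therefore a $3$-cycle. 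After relabeling the $b_j$'s to make $\pi_1$ the identity, one of the two possible derangements can be chosen for $\pi_2$, and these two choices are interchanged by the symmetry that swaps $c_1\leftrightarrow c_2$. This pins $M$ down up to isomorphism, so $M\cong R_9$.

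The hardest part is the bookkeeping in the second paragraph: I must carefully verify which triples of elements can form a triangle through $c_1$ and rule out any would-be parallel class in $M/c_1$ apart from $\{x,c_2\}$ and the matched pairs, so that $|\si(M/c_1)|$ comes out to exactly $7-t$. Everything else reduces to counting and to the small derangement argument at the end.
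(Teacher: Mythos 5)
Your proof is correct and follows essentially the same route as the paper: rule out a $5$-point line to get line sizes $\{4,4,3\}$, then use the absence of $U_{2,5}$ in a contraction to force a perfect matching of triangles through each of the two non-$x$ points of the $3$-point line, so that each lies on four long lines. Your closing derangement argument just makes explicit the uniqueness of the resulting line configuration, which the paper absorbs into its definition of $R_9$.
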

\begin{proof}
Suppose that $M$ has no $U_{2,5}$-minor, and let $x$ be a point that is on three long lines.
Since $|M| = 9$, two of these lines have four points, and one has three points. 
Let $L_1$ and $L_2$ be the $4$-point lines through $x$, and let $\{x,y,z\}$ be the $3$-point line through $x$.
Since $M$ has no $U_{2,5}$-minor, each element of $L_1 - \{x\}$ is on a long line with $y$ and a long line with $z$.
Thus, $y$ and $z$ are both on four long lines of $M$, so $M \cong R_9$.
\end{proof}

We now show that, up to isomorphism, there are only two rank-$r$ non-trivial single-element extensions of a clique that have no minor isomorphic to $U_{2,5}$, $F_7$, or $R_9$.

\begin{proposition} \label{projections}
Let $M$ be a simple matroid of rank at least three with a set $X$ so that $M|X \cong M(K_{r(M)+1})$, and let $e \in E(M) - X$.
If $M$ has no minor isomorphic to $U_{2,5}$, $F_7$, or $R_9$, then either
\begin{enumerate}[$(a)$]
\item the extension of $M|X$ by $e$ corresponds to the modular cut generated by a $3$-point line, or

\item the extension of $M|X$ by $e$ corresponds to the modular cut generated by a pair of $2$-point lines whose union is a circuit.
\end{enumerate}
\end{proposition}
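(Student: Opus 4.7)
The plan is to classify the modular cut $\cF$ of $M|X \cong M(K_{r(M)+1})$ corresponding to the single-element extension $M|(X \cup \{e\})$, using the bijection between modular cuts and single-element extensions. Since $M$ is simple, $\cF$ contains no flat of rank less than $2$, so every minimal element of $\cF$ has rank at least $2$. The aim is to show that either (a) $\cF$ has a unique minimal element, which is a $3$-point line, or (b) $\cF$ has exactly two minimal elements, both $2$-point lines whose union is a $4$-circuit.

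The first main step is to show that $\cF$ contains a rank-$2$ flat. If not, let $F'$ be a minimum-rank element, so $r(F') \geq 3$; I would then do case analysis on the vertex-partition of $K_{r+1}$ corresponding to $F'$. If that partition has a block of size at least $4$, then $F'$ contains the six edges of an $M(K_4)$-submatroid on those vertices; restricting $M$ to these six elements and $e$, then contracting $e$, gives $U_{2,6}$---no two of these elements are parallel in the contraction because no $2$-line of $M|X$ lies in $\cF$---and this produces a $U_{2,5}$-minor. Otherwise all blocks of the partition have size at most $3$, and I would contract a carefully chosen element of $F'$ (for example, a triangle edge of an $M(K_3)$-summand, or an edge of an $M(K_2)$-summand) to produce a smaller flat in the quotient matroid. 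Tracking the effect on the modular cut via the fact that $F \in \cF$ with the contracted element in $F$ corresponds to $F \setminus \{\text{contracted}\} \in \cF'$, this either reduces $F'$ to a $2$-point line in the quotient (which, by the analysis of the single-$2$-line situation in Step 3, again yields a $U_{2,5}$-minor) or collapses two blocks into one of size at least $4$, returning to the previous easy case. Higher-rank minimal flats are handled by iterating these reductions.

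Now assume $\cF$ contains a $3$-point line $L$ corresponding to a triangle $\{uv, uw, vw\}$ of $K_{r+1}$. I would show that $L$ is the unique minimal element of $\cF$. For any second minimal line $L' \in \cF$, a case analysis on $|V(L') \cap \{u,v,w\}|$ together with a direct rank computation in the graphic matroid shows in each case that $r(L) + r(L') = r(L \cup L') + r(L \cap L')$, so $(L, L')$ is a modular pair, forcing $L \cap L' \in \cF$; but $L \cap L'$ then has rank $0$ or $1$, contradicting the simplicity of $M$. The same modular-pair argument excludes any minimal flat of rank at least $3$ not containing $L$, so $\cF$ is generated by $L$ and case (a) holds.

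Finally, suppose $\cF$ has a $2$-point line $L_1$ (two disjoint edges on four vertices $V(L_1)$) but no $3$-point line. Applying the Step-$1$ argument to the $K_4$ on $V(L_1)$ together with $e$ forces $\cF$ to contain a second line $L_2$, since otherwise contracting $e$ in $M|(E(K_4) \cup \{e\})$ produces $U_{2,5}$. The modular-pair analysis again rules out any $L_2$ with $V(L_2) \neq V(L_1)$, so $L_2$ must be another perfect matching of the $K_4$ on $V(L_1)$, and $L_1 \cup L_2$ is a $4$-circuit. A direct computation shows that if all three perfect matchings of this $K_4$ lay in $\cF$, then $M|(E(K_4) \cup \{e\})$ would be isomorphic to $F_7$, a forbidden minor; and any additional minimal flat is excluded by modular pairs with $L_1$ or $L_2$. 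Thus $\cF$ is generated by $\{L_1, L_2\}$ and case (b) holds. The main obstacle is the volume of graph-theoretic case analysis in Step $1$, where every vertex-partition type of a rank-$\geq 3$ flat of $M(K_{r+1})$ must be handled by finding a contraction that reduces it either to an $M(K_4)$-substructure or to a single $2$-point line in the quotient.
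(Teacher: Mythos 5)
Your Steps 2 and 3 (a $3$-point line in $\cF$ forces case $(a)$; a $2$-point line with no $3$-point line forces exactly one partner matching in the same $K_4$, with three matchings giving $F_7$) are essentially the paper's argument and are sound. The problem is Step 1, the claim that $\cF$ contains a rank-$2$ flat at all (equivalently, that some line of $M|X$ spans $e$). This is the crux of the paper's proof and your reduction for it does not work. A minimum-rank flat $F'$ of rank $3$ in $M(K_{r+1})$ is either a $K_4$ (your easy case), a triangle plus a disjoint edge, or a $3$-edge matching. In the triangle-plus-edge case, contracting the disjoint edge turns $F'$ into a $3$-point line spanning $e$ in the quotient; that is outcome $(a)$ for the quotient and yields no forbidden minor and no contradiction. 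In the $3$-matching case, contracting one edge leaves a $2$-point line spanning $e$ in the quotient, but your ``single-$2$-line'' argument only produces $U_{2,5}$ when that line is the \emph{unique} line of the quotient through $e$; the quotient's modular cut may perfectly well contain a partner matching (outcome $(b)$) or a $3$-point line, and then again nothing is contradicted. In short, deducing that $e$ lies on a line \emph{of some quotient} is compatible with all hypotheses, so no contradiction with the assumption on $\cF$ is reached; the same objection applies to the promised ``iteration'' for higher-rank minimal flats (where, additionally, a size-$\ge 4$ block gives a $K_4$ whose closure is \emph{not} in $\cF$ by minimality of $F'$, so your $U_{2,6}$ argument does not apply to it).

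The paper's proof of this claim is genuinely global rather than local: assuming no line of $M|X$ spans $e$, it passes to a minor-minimal simple minor $M_1$ with a spanning clique $X_1$ such that $(M_1/e)|X_1$ is still simple, shows every plane of $M_1|X_1$ spanning $e$ has size $3$ or $4$ (hence $4 \le r(M_1) \le 5$), shows by minimality that \emph{every} element of $X_1$ lies in such a plane, and then uses counting together with Lemma~\ref{R} to exhibit an $R_9$- or $U_{2,5}$-minor. Note that $R_9$ is genuinely needed here (a single $3$-matching spanning $e$ just gives a $U_{3,4}$-restriction locally, which is harmless), so any correct argument must engage a larger portion of the clique than the minimal flat itself; your proposal contains no mechanism for doing this.
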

\begin{proof}
Let $\cL$ denote the set of lines of $M|X$ that span $e$.
Since $M|X \cong M(K_{r(M) + 1})$, each line in $\cL$ has size at most three.
The $3$-point lines in $\cL$ correspond to triangles of $K_{r(M) + 1}$, and the $2$-point lines in $\cL$ correspond to $2$-edge matchings of $K_{r(M) + 1}$.

\begin{claim} \label{simple}
$\cL$ is non-empty.
\end{claim}
\begin{proof}
Assume that $\cL$ is empty.
Then $(M/e)|X$ is simple.
Let $M_1$ be a minimal simple minor of $M$ of rank at least three so that 
\begin{itemize}
\item $e \in E(M_1)$, 

\item there is a set $X_1\subseteq X$ so that $M_1|X_1 \cong M(K_{r(M_1) + 1})$, and 

\item $(M_1/e)|X_1$ is simple.
\end{itemize}

Since $M$ has no $U_{2,5}$-minor and $(M_1/e)|X_1$ is simple, each plane of $M_1|X_1$ that spans $e$ has size at most four.
Thus, $r(M_1) \ge 4$.
By minimality, each element of $X_1$ is in a plane of $M_1|X_1$ that spans $e$.
Each plane of $M_1|X_1$ has size $3$ or $4$, corresponding to a $3$-edge matching, or a triangle and disjoint edge.
Since each plane of $M_1|X_1$ is spanned by an $M(K_m)$-restriction of $M_1|X_1$ for some $m \le 6$, the minimality of $M_1$ implies that $r(M_1) \le 5$.

Suppose $r(M_1) = 4$.
Then each element of $X_1$ is in a $4$-element plane of $M_1|X_1$ that spans $e$. 
Since $|X_1| = 10$, there is some $x \in X_1$ that is in at least two such planes.
Each element of $X_1$ is on at least three long lines of $M_1|X_1$, and is thus on at least three long lines of $(M_1/e)|X_1$, or else $M_1/e$ has a $U_{2,5}$-restriction, since $(M_1/e)|X_1$ is simple.
But $x$ is on two $4$-point lines in $M_1/e$, so Lemma~\ref{R} implies that $M_1$ has a minor isomorphic to $U_{2,5}$ or $R_9$, a contradiction.
Thus, $r(M_1) = 5$.

By the minimality of $M_1$, each plane of $M_1|X_1$ that spans $e$ has size three.
Moreover, each element $x\in X_1$ is in at least two planes of $M_1|X_1$ that span $e$; otherwise $e$ is spanned by a unique line of $(M_1/x)|(X_1 - \{x\})$, and this line has two points, so $M_1/\{x,e\}$ has a $U_{2,5}$-restriction.
We may assume that the ground set of $M_1|X_1$ is the edge set of the complete graph with vertex set $\{0,1,2,3,4,5\}$.
Let $x = 01$ and choose $y = 02$.
We will show that $M_1/\{e,y\}$ has at least three long lines through $x$, two of which have four points, and obtain a contradiction using Lemma~\ref{R}.
Let $Z$ be the union of the three $3$-point lines of $M_1|X_1$ that contain $x$ but not $y$.

Note that $Z$ does not span $e$ or $y$ in $M_1$.
Suppose $(M_1/\{e,y\})|Z$ is not simple.
Then there is a circuit $\{x_1, x_2, e, y\}$ of $M_1$ where $x_1$ and $x_2$ are in $Z - \{x\}$.
Therefore, $\{y, x_1, x_2\}$ is not a $3$-edge matching in the $K_6$ corresponding to $M_1|X_1$, contradicting the fact that each plane of $M_1|X_1$ spanning $e$ has size three.
Thus, $(M_1/\{e,y\})|Z$ is simple, so there are at least three long lines of $M_1/\{e,y\}$ that contain $x$.

There is no plane of $M_1|X_1$ that spans $e$ and contains both $x$ and $y$, since such a plane would have at least four elements.
Since $x$ is in at least two planes of $M_1|X_1$ that span $e$, this implies that $x$ is in at least two planes of $(M_1/y)|(X_1 - \{y\})$ that span $e$ in $M_1/y$.
Moreover, these planes each have at least four points, since $\si((M_1/y)|(X_1 - \{y\})) \cong M(K_5)$ and so has no $3$-point plane.
Thus, in $M_1/\{y,e\}$, the element $x$ is on at least two $4$-point lines.
Since $x$ is on at least three long lines, Lemma~\ref{R} implies that $M_1/\{e,y\}$ has an $R_9$-restriction or a $U_{2,5}$-minor, a contradiction.
\end{proof}


Since $\cL$ is non-empty, the sets in $\cL$ are pairwise disjoint and pairwise coplanar.
To finish the proof, we will use the following two facts:
\begin{enumerate}[(i)]
\item If $(F, F')$ is a modular pair of flats of $M|X$ such that $F$ and $F'$ both span $e$, then the flat $F\cap F'$ spans $e$ as well.

\item If $F$ and $F'$ are flats of $M|X$, and $M|F'$ is isomorphic to a clique, then $(F, F')$ is a modular pair.
\end{enumerate}

First assume that $\cL$ contains a $3$-point line $L$.
If $F$ is a flat of $M|X$ that spans $e$, then $F \cap L$ is a flat of $M|X$ that spans $e$, by (i) and (ii).
Since $M$ is simple, $r(F \cap L) \ne 1$, so $F$ contains $L$.
Thus, $(a)$ holds, so we may assume that each line in $\cL$ has size two.
Then each such line corresponds to a $2$-edge matching in $K_{r(M) + 1}$.
Since the lines in $\cL$ are pairwise coplanar, the union of two such lines corresponds to a $4$-cycle of $K_{r(M) + 1}$.
Thus, $|\cL| \le 3$.
Moreover, if $\cL = \{L_1, L_2, L_3\}$, then $M|(L_1\cup L_2\cup L_3) \cong M(K_4)$.
As every element of $M|(L_1\cup L_2 \cup L_3 \cup \{e\})$ is on three long lines, $M|(L_1\cup L_2 \cup L_3 \cup \{e\}) \cong F_7$, a contradiction.
Thus, $|\cL| \le 2$.

Let $Z \subseteq X$ so that $M|Z \cong M(K_4)$ and $Z$ contains each line in $\cL$.
If $|\cL| = 1$, then $(M/e)|Z \cong U_{2,5}$, a contradiction.
Therefore $|\cL| = 2$.
Let $L_1$ and $L_2$ be the two lines in $\cL$.
If $F$ is a flat of $M|X$ that spans $e$, then $F \cap Z$ is a flat of $M|X$ that spans $e$, by (i) and (ii).
Since $M$ is simple, the flat $F \cap Z$ has rank at least two.
If $F$ does not contain $L_1$ or $L_2$, this implies that $F \cap Z$ is a line of $M|X$ other than $L_1$ or $L_2$.
This gives a contradiction as either $\cL$ contains a $3$-point line, or $|\cL| > 2$.
We conclude that $F$ contains $L_1$ or $L_2$, and so $(b)$ holds.
\end{proof}

Let $M$ be a matroid with a set $X$ so that $M|X \cong M(K_{r(M)+1})$.
We say that an element $e \in E(M) - X$ is \emph{type-$(a)$} if $e$ is freely added to a $3$-point line of $M|X$, and is \emph{type-$(b)$} if the extension of $M|X$ by $e$ corresponds to the modular cut generated by a pair of $2$-point lines of $M|X$ whose union is a circuit.
Proposition~\ref{projections} says that if $M$ is simple and has no minor isomorphic to $U_{2,5}$, $F_7$, or $R_9$, then every element in $E(M) - X$ is type-$(a)$ or type-$(b)$.
Also note that $e$ is type-$(a)$ if and only if $\si((M/e)|X) \cong T_{r(M) - 1}$, and $e$ is type-$(b)$ if and only if $\si((M/e)|X) \cong T'_{r(M) - 1}$.
\end{section}


\begin{section}{Two-Element Extensions} \label{2-extensions}
We now analyze the structure of a $2$-element extension of a clique, under the additional assumption that $U_{2,4} \oplus U_{2,4}$ is excluded as a minor.
In a matroid $M$, for sets $A, B \subseteq E(M)$, we write $\sqcap(A,B)$ for $r_M(A) + r_M(B) - r_M(A\cup B)$.

\begin{lemma} \label{2 and 3}
Let $M$ be a simple matroid of rank at least five with no minor in $\{U_{2,5}, F_7, R_9, \U\}$, and with a set $X$ so that $M|X \cong M(K_{r(M)+1})$.
If $e, f\in E(M) - X$ so that $e$ is type-$(a)$ and $f$ is type-$(b)$, then there is a set $Z \subseteq X$ so that $M|Z \cong M(K_4)$ and $e,f \in \cl_M(Z)$. 
\end{lemma}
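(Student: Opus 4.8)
The plan is to translate everything into the graph $K := K_{r(M)+1}$ whose edge set is $X$, and to reduce the statement to a combinatorial claim about the triangle of $e$ and the $4$-cycle of $f$. Since $e$ is type-$(a)$, it is freely added to a $3$-point line of $M|X$, that is, to a triangle $T_e$ of $K$ on a vertex set $V_e$ with $|V_e|=3$; write $\ell_e := \cl_M(T_e \cup \{e\})$ for the resulting $4$-point line. Since $f$ is type-$(b)$, by Propositions~\ref{T'_r} and~\ref{projections} it lies in the closure of each of two $2$-point lines $L_1,L_2$ whose union is a $4$-element circuit; these form a $4$-cycle $C_f$ on a vertex set $V_f$ with $|V_f|=4$, and $f$ lies in the rank-$3$ flat $P_f := \cl_M(C_f \cup \{f\})$, the plane of a $K_4$-restriction. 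The first step is to observe that, for a set $W$ of four vertices and $Z := E(K[W])$, we have $M|Z \cong M(K_4)$, that $e \in \cl_M(Z)$ if and only if $V_e \subseteq W$, and that $f \in \cl_M(Z)$ if and only if $V_f \subseteq W$ (because each of $L_1,L_2$ meets all four vertices of $C_f$). Hence $Z := E(K[V_f])$ does the job precisely when $V_e \subseteq V_f$, and the lemma reduces to proving this inclusion.

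I would then argue by contradiction, assuming a vertex $v \in V_e \setminus V_f$, and produce a minor in $\{U_{2,5}, F_7, R_9, \U\}$. The controlling quantity is $\sqcap(\ell_e, P_f)$, which I would compute graphically: since $r_M(\ell_e \cup P_f) = r_M(T_e \cup C_f) = |V_e \cup V_f| - (\text{number of components of } T_e \cup C_f)$, the value of $\sqcap(\ell_e,P_f)$ is determined by $|V_e \cap V_f|$, equal to $0$ when $|V_e \cap V_f| \le 1$ and to $1$ when $|V_e \cap V_f| = 2$. When $|V_e \cap V_f| \le 1$ the two flats are skew, and I would collapse $P_f$ to a line by contracting a suitable chord of $C_f$: this turns $C_f \cup \{f\}$ into a $4$-point line through $f$ that remains skew to $\ell_e$, so the restriction to these eight elements is $U_{2,4} \oplus U_{2,4} = \U$, a contradiction.

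The main obstacle is the case $|V_e \cap V_f| = 2$: here $\sqcap(\ell_e, P_f) = 1$, the two flats meeting exactly in the point $g_0$, the edge of $K$ joining the two common vertices, so the previous construction fails — every $4$-point line through $f$ obtained by collapsing $P_f$ necessarily passes through $g_0 \in \ell_e$, and any contraction removing $g_0$ also collapses $\ell_e$. To get around this I would aim not for $\U$ but for the hypothesis of Lemma~\ref{R}. Contracting a single carefully chosen edge incident with $V_f$ puts $f$ on a $4$-point line through $g_0$ while keeping $\ell_e$ intact and $4$-point; together with $\ell_e$ and a $3$-point line through $g_0$ supplied by a sixth vertex of $K$ (available since $r(M) \ge 5$ forces $K$ to have at least six vertices), this produces three long lines through the common point $g_0$ spanning exactly nine elements. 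Lemma~\ref{R} then yields either a $U_{2,5}$-minor or an $R_9$-restriction, again a contradiction. The delicate points, which I would verify carefully, are that the chosen contraction neither merges the two common vertices (which would collapse $\ell_e$) nor makes $f$ parallel to a graph element, and that the resulting nine points are genuinely distinct.

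Finally, the contradiction in every case forces $V_e \subseteq V_f$, so $Z := E(K[V_f])$ satisfies $M|Z \cong M(K_4)$ and $e,f \in \cl_M(Z)$, as required. I expect the bookkeeping in the case $|V_e \cap V_f| = 2$ — in particular checking simplicity and the exact line sizes so that Lemma~\ref{R} applies — to demand the most care, with the skew cases being comparatively routine.
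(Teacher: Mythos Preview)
Your reduction to $V_e \subseteq V_f$ and your handling of the skew cases $|V_e \cap V_f|\le 1$ are correct and match the paper's argument (which contracts $f$ rather than a chord of $C_f$, to the same effect). The gap is in the case $|V_e\cap V_f|=2$. With $V_e=\{0,1,2\}$, $V_f=\{1,2,3,4\}$, and $g_0=12$, contracting a suitable chord of $C_f$ does produce a second $4$-point line through $g_0$ and $f$, as you claim; the problem is the third line. The triangle $\{12,15,25\}$ from a sixth vertex~$5$ is not coplanar with $\ell_e$ and the new line: those two $4$-point lines span a rank-$3$ flat supported on vertex set $\{0,1,2,3,4\}$, and $\{15,25\}$ lies outside it, so your nine points have rank~$4$ and Lemma~\ref{R} does not apply. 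Nor is there a coplanar substitute --- that rank-$3$ flat contains only eight points, and $\ell_e$ and the new line are its only long lines through $g_0$.

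The paper resolves this case by contracting $e$ rather than a graph edge, and it must distinguish whether $g_0$ lies on $C_f$ or is a chord of it. When $g_0\in C_f$ (so $\{12,34\}$ is one of the two matchings spanning $f$), the three lines $\{12,03,13,23\}$, $\{12,04,14,24\}$, $\{12,34,f\}$ of $M/e$ all lie in a genuine rank-$3$ set on vertex set $\{0,1,2,3,4\}$, and Lemma~\ref{R} applies with no sixth vertex needed. When $g_0$ is a chord of $C_f$, Lemma~\ref{R} is not used at all; instead one exhibits two skew $U_{2,4}$-restrictions of $M/e$, one of which does use the sixth vertex, giving a $\U$-minor. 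Contracting the type-$(a)$ element $e$, and separating these two sub-cases, is the idea your outline is missing.
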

\begin{proof}
Since $e$ is a type-$(a)$ element, there is a $3$-point line $L_1$ of $M|X$ so that $e$ is freely added to $L_1$.
By Proposition~\ref{T'_r}, since $f$ is a type-$(b)$ element, there are $2$-point lines $L_2$ and $L_2'$ of $M|X$ so that $L_2 \cup L_2'$ is a $4$-element circuit, and a flat $F$ of $M|X$ spans $f$ if and only if $F$ contains $L_2$ or $L_2'$.

We may assume that $\cl_M(L_2 \cup L_2')$ does not contain $e$ otherwise the lemma holds.
Thus $\sqcap(L_2 \cup L_2', L_1) \ne 2$.
If $\sqcap(L_2 \cup L_2', L_1) = 0$, then $L_1 \cup \{e\}$ and $\cl_{M|X}(L_2 \cup L_2')$ are skew rank-$2$ sets in $M/f$ each with four points, so $M$ has $\U$ as a minor, a contradiction.
We deduce that $\sqcap(L_2 \cup L_2', L_1) = 1$.
Hence there is some $Z \subseteq X$ such that $L_1\cup L_2 \cup L_2' \subseteq Z$ and $M|Z \cong M(K_5)$. 
Let $Z \subseteq Y \subseteq X$ so that $M|Y \cong M(K_6)$. 
We may assume that the ground set of $M|Y$ is the edge set of the complete graph with vertex set $\{0,1,2,3,4,5\}$, and that the ground set of $M|Z$ is the edge set of the complete graph with vertex set $\{0,1,2,3,4\}$.
As $\sqcap(L_2 \cup L_2', L_1) = 1$, we may assume that $L_1 = \{01, 02, 12\}$, and $L_2 = \{13, 24\}$.

Suppose that $\{z, e, f\}$ is a circuit of $M$ for some $z \in Z$. 
Then either $z$ meets precisely one of $0$, $1$, or $2$, or $z = 34$.
In the first case, $L_1 \cup \{z\}$ spans an $M(K_4)$-restriction of $M|X$, so $f$ is spanned by a plane of $M|X$ that does not contain $L_2$ or $L_2'$, a contradiction.
If $z = 34$, then $e \in \cl_M(L_2 \cup L_2')$, a contradiction.
Thus, no such element $z$ exists.


Assume that $L_2' = \{14, 23\}$.
As $e \notin \cl_M(L_2 \cup L_2')$, the elements $13, 14, 12, 34$ are on distinct lines of $M/e$ through $f$.
Moreover, each element of $(M/e)|(Z \cup \{f\})$ is on one of these four lines, or else $M/\{e,f\}$ has a $U_{2,5}$-restriction.
The sets $\{f, 13, 24\}$ and $\{f, 14, 23\}$ have rank two in $M/e$, and do not span any element in $\{03, 04, 34\}$, otherwise $e$ is spanned by a plane of $M|Z$ that does not contain $L_1$, a contradiction.
This means that each element in $\{03, 04, 34\}$ is spanned in $M/e$ by either $\{f, 12\}$ or $\{f, 34\}$.
Since $r_{M/e}(\{03, 04, 34\}) = 2$, this implies that $\{03, 04, 34\}$ is spanned in $M/e$ by $\{f, 34\}$, so $\{03, 04, 34, f\}$ is a $U_{2,4}$-restriction of $(M/e)|(Z \cup \{f\})$.
But $\{01, 03, 04, 05\}$ is a basis of $(M/e)|Y$, and $\{01, 05, 15, 25 \}$ is a $U_{2,4}$-restriction of $M/e$, since it spans $L_1$ in $M$.
Thus, $M/e$ has a $(\U)$-restriction, a contradiction.


Now assume that $L_2' = \{12, 34 \}$.
Let $N = (M/e)|(Z \cup \{f\})$, and let $N' = N \del \{01, 02\}$.
Then $N'$ is a simple rank-$3$ matroid on nine elements. 
The sets $\{12, 03, 13, 23\}$ and $\{12, 04, 14, 24\}$ are lines of $N'$ because they span $e$ in $M$, and the set $\{12, 34, f\}$ is a line of $N'$ because it is a line of $M$ that does not span $e$. 
But then $N'$ has a $U_{2,5}$-minor or $N' \cong R_9$, by Lemma~\ref{R}, a contradiction.
\end{proof}

The following lemma imposes structure on a different $2$-element extension of a clique.

\begin{lemma} \label{2 and 2}
Let $M$ be a simple matroid of rank at least six with no minor in $\{U_{2,5}, F_7, R_9, \U\}$ and with a set $X$ so that $M|X \cong M(K_{r(M)+1})$.
Let $C_1$ and $C_2$ be distinct $4$-element circuits of $M|X$ so that, for each $i \in \{1,2\}$, there is an element $e_i \in E(M) - X$ spanned by both $2$-point lines of $M|X$ contained in $C_i$.
Then $|C_1 \cap C_2| = 2$, and there is a set $Z \subseteq X$ so that $M|Z \cong M(K_5)$ and $C_1 \cup C_2 \subseteq Z$.
\end{lemma}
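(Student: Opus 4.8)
The plan is to mirror the structure of the preceding Lemma~\ref{2 and 2 and 3} but now with two type-$(b)$ elements. First I would record what each type-$(b)$ element tells us. By Proposition~\ref{T'_r}, for each $i\in\{1,2\}$ the circuit $C_i$ splits into two $2$-point lines, and a flat of $M|X$ spans $e_i$ if and only if it contains one of those two lines. I would then argue about $\sqcap(C_1, C_2)$, the projective connectivity between the two circuits, exactly as in the previous lemma. The conclusion $|C_1\cap C_2|=2$ together with $M|Z\cong M(K_5)$ for some $Z\supseteq C_1\cup C_2$ amounts to showing $\sqcap(C_1,C_2)=1$, i.e. that the two $4$-element circuits share a common rank-one flat (a vertex of the underlying graph) but are otherwise generic. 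So the real content is to rule out $\sqcap(C_1,C_2)\in\{0,2,3\}$ using the excluded minors.

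The key steps, in order, are as follows. First, $\sqcap(C_1,C_2)=0$: then $\cl(C_1)$ and $\cl(C_2)$ are skew rank-$2$ sets, and in $M/e_1$ (or contracting to expose both structures) I expect to find two skew $U_{2,4}$-restrictions, yielding a $\U$-minor, the contradiction used throughout. Second, the large-intersection cases: if $\sqcap(C_1,C_2)$ is too big, then $C_1\cup C_2$ spans a flat of rank $3$ or less inside $M|X$, but since $M|X$ is a clique any rank-$3$ flat is an $M(K_4)$ with only $3$-point lines, whereas each $C_i$ forces two genuine $2$-point lines spanning $e_i$; the resulting small configuration should collapse, via Lemma~\ref{R} or a direct count of long lines through a point, into a $U_{2,5}$-minor, an $R_9$-restriction, or an $F_7$-restriction. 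I would translate everything into the complete-graph model, naming $C_1$ and $C_2$ as explicit $4$-cycles on a small vertex set $\{0,1,\dots\}$, and split into cases according to whether the $4$-cycles are edge-disjoint, share one edge, or share two vertices but no edge; each case is then a finite graph computation.

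The main obstacle I anticipate is the intermediate case $\sqcap(C_1,C_2)=2$, where $C_1\cup C_2$ has rank $4$ but is not the desired generic $M(K_5)$-configuration. Here the two circuits overlap enough to interact but not so much as to collapse to a plane, and the two type-$(b)$ elements $e_1,e_2$ may themselves be coplanar or nearly so. The delicate point is to locate, after a single contraction, simultaneously a $U_{2,4}$ on one side and a skew $U_{2,4}$ on the other to build the $\U$-minor, while checking that the contraction does not accidentally create parallel elements that destroy one of the lines. I would handle this by choosing the contracted element to be an edge meeting a vertex \emph{not} shared by the two $4$-cycles, so that each $C_i$ retains its two distinct $2$-point lines spanning the respective $e_i$, and then verify skewness by a rank computation in the graphic matroid. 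As in the previous lemma, once the offending configurations are ruled out, $\sqcap(C_1,C_2)=1$ forces a common vertex, hence $|C_1\cap C_2|=2$ and the $M(K_5)$-restriction containing $C_1\cup C_2$, completing the proof.
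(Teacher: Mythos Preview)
Your plan rests on a miscomputation of $\sqcap(C_1,C_2)$ and of what the conclusion actually says. Each $C_i$ is a $4$-element circuit, so $r(C_i)=3$, not $2$; in particular $\cl(C_1)$ and $\cl(C_2)$ are rank-$3$ flats (copies of $M(K_4)$), not rank-$2$ sets. More seriously, the conclusion of the lemma---$|C_1\cap C_2|=2$ and $C_1\cup C_2$ contained in an $M(K_5)$-restriction---forces $r(C_1\cup C_2)\le 4$, hence $\sqcap(C_1,C_2)\ge 2$. The value $\sqcap=1$ that you are aiming for corresponds to the two $4$-cycles sharing exactly two vertices and spanning a $K_6$, which is precisely one of the configurations you must \emph{exclude}. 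Conversely, the cases $\sqcap\in\{2,3\}$ that you propose to rule out contain the desired outcome. (Also, rank-one flats of $M(K_n)$ are edges, not vertices.) So the case split as stated would, if carried out, disprove the lemma rather than prove it.

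Even once the bookkeeping is repaired, your outline misses the mechanism the paper uses. To place $C_1\cup C_2$ inside an $M(K_5)$, the paper does not analyse $\sqcap$ directly; instead it contracts an element $x\in\cl_{M|X}(C_1)-C_1$ that lies outside $\cl_{M|X}(C_2)$. This turns $e_1$ into a type-$(a)$ element (freely on a $3$-point line) while leaving $e_2$ type-$(b)$, and then Lemma~\ref{2 and 3} applies to the contracted matroid to give the $K_4$ containing both; lifting back yields the $K_5$. After that, the genuine work is the sub-case $|C_1\cap C_2|=1$ \emph{inside} the $K_5$, which has $\sqcap=2$ just like the good sub-case $|C_1\cap C_2|=2$, so $\sqcap$ alone cannot separate them. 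The paper handles this by first showing no $z\in Z$ makes $\{z,e_1,e_2\}$ a triangle, and then exhibiting three long lines through a common point in $M/e_1$, two of them $4$-point lines, so that Lemma~\ref{R} yields the forbidden $R_9$ or $U_{2,5}$. Your sketch does not isolate this sub-case or indicate how to produce the $R_9$ configuration.
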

\begin{proof}
We may assume that $E(M) = X \cup \{e_1, e_2\}$.
Note that $e_1 \ne e_2$, and that $e_1$ and $e_2$ are both type-$(b)$.
Let $x$ and $y$ be distinct elements in $\cl_{M|X}(C_1) - C_1$.
Then either $x$ or $y$ is not in $\cl_{M|X}(C_2)$, or $C_1$ and $C_2$ are contained in an $M(K_4)$-restriction of $M|X$ and the lemma follows.
Assume that $x \notin \cl_{M|X}(C_2)$.
The matroid $(M/x)|(X-\{x\})$ has a spanning-clique restriction, and $e_2$ is spanned by both $2$-point lines of $(M/x)|(X-\{x\})$ contained in $C_2$.
Moreover, the simplification of $(M/x)|(C_1 \cup \{y\})$ is a $3$-point line, $L$, that spans $e_1$ in $M/x$.
Then $L \cup C_2$ is contained in an $M(K_4)$-restriction of $M/x$, by Lemma~\ref{2 and 3}.
Since $C_1$ spans $x$ in $M$, this implies that there is a set $Z \subseteq X$ so that $M|Z \cong M(K_5)$ and $C_1 \cup C_2 \subseteq Z$. 

We now prove that $|C_1 \cap C_2| = 2$.
We may assume that the ground set of $M|Z$ is the edge set of the complete graph with vertex set $\{0,1,2,3,4\}$.
Assume that $|C_1 \cap C_2| \ne 2$.
Then $|C_1 \cap C_2| = 1$ since $K_5$ has no edge-disjoint $4$-cycle pairs.
Up to relabeling vertices, we may assume that $C_1 = \{01, 12, 23, 03\}$ and $C_2 = \{01, 14, 42, 02\}$.

Suppose there is an element $z \in Z$ for which $\{z, e_1, e_2\}$ is a circuit of $M$.
Assume $z$ is in $\cl_M(C_1)$ or $\cl_M(C_2)$. 
Then $C_1$ or $C_2$ spans both $e_1$ and $e_2$.
Since $(\cl_{M}(C_1), \cl_{M}(C_2))$ is a modular pair of flats of $M$ whose intersection contains $\{01, 12, 02\}$ and $e_1$ or $e_2$, we deduce that $e_1$ or $e_2$ is spanned by $\{01, 12, 02\}$, a contradiction.
We conclude that $z \notin \cl_M(C_1) \cup \cl_M(C_2)$, and so $z = 34$.
The set $\cl_M(\{12, 03, z\})$ contains neither $\{01, 24\}$ nor $\{02, 14\}$, and thus $e_2 \notin\cl_M(\{12, 03, z\})$ because $e_2$ is type-$(b)$.
This implies that $e_2 \notin \cl_{M/z}(\{12, 03\})$. 
But $e_1 \in \cl_{M/z}(\{12, 03\})$ because $e_1 \in \cl_M(\{12, 03\})$, and so $e_1$ and $e_2$ are not parallel in $M/z$, a contradiction.

The set $\{01, 02, 03, 13\}$ is a $U_{2,4}$-restriction of $M/e_1$.
Since $01$ and $23$ are parallel in $M/e_1$ and $\{e_2, 01, 24\}$ is a line of $M$ and $\{e_1, e_2\}$ spans no element of $X$, the set $\{01, 24, 34, e_2\}$ is a $U_{2,4}$-restriction of $M/e_1$.
Since $\{01, 04, 14\}$ is a $U_{2,3}$-restriction of $M/e_1$ and $M/e_1 \del \{12, 23\}$ is simple, Lemma~\ref{R} implies that $M/e_1$ restricted to the union of the three sets $\{01, 02, 03, 13\}$, $\{01, 24, 34, e_2\}$, and $\{01, 04, 14\}$ is either isomorphic to $R_9$, or has a $U_{2,5}$-minor, a contradiction.
\end{proof}
\end{section}


\begin{section}{$k$-Element Extensions} \label{k-extensions}
In this section, we place structure on $k$-element extensions of a clique with $k \ge 3$, in three cases.
We say that a point $x$ of a simple matroid $M$ is \emph{special} if it is on at least two $4$-point lines, is a tip of at least two spike restrictions of $M$, or is a tip of a spike restriction of $M$ and is on a $4$-point line.


\begin{lemma} \label{3 case}
Let $M$ be a simple matroid with no minor in $\{U_{2,5}, F_7, R_9\}$, and with a set $X$ so that $M|X \cong M(K_{r(M) + 1})$.
If $|E(M) - X| \le 3$, then $M$ has at most $21$ special points.
\end{lemma}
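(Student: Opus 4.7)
The plan is to associate with each element of $E(M)-X$ a small ``core'' subset of $E(M)$, and show that every special point of $M$ lies in the union $U$ of these cores.

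By Proposition~\ref{projections}, each $e \in E(M)-X$ is either type-$(a)$ or type-$(b)$. When $e$ is type-$(a)$, freely added to a $3$-point line $T_e$ of $M|X$, I take $U_e := T_e \cup \{e\}$, which has size four. When $e$ is type-$(b)$, with modular cut generated by a $4$-element circuit $C_e$ of $M|X$, I take $U_e := K_4^{(e)} \cup \{e\}$, where $K_4^{(e)}$ is the unique $M(K_4)$-restriction of $M|X$ spanned by $C_e$; this has size seven. Setting $U := \bigcup_e U_e$, we have $|U| \le 3 \cdot 7 = 21$, since $|E(M)-X|\le 3$ and $|U_e|\le 7$ for every $e$.

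It then suffices to show that every special point of $M$ lies in $U$. For $4$-point lines: since $M|X \cong M(K_{r(M)+1})$ has only $3$-point long lines, and a type-$(b)$ extension by $e$ creates only the two $3$-point lines $L\cup\{e\}$ (one for each generator $L$ of $e$'s modular cut), every $4$-point line of $M$ must have the form $T_e\cup\{e\}$ for some type-$(a)$ element $e$, or the form $L\cup\{e_1,e_2\}$ for two type-$(b)$ elements $e_1,e_2$ whose modular cuts share the $2$-point line $L$ of $M|X$. In either case the line lies in $U$. For tips of spike restrictions: any candidate tip $t\in X$ has its legs-from-$M|X$ represented in $\si((M|X)/t) \cong M(K_{r(M)})$ by a star of edges at the contracted vertex, which is an independent set rather than a circuit, so $M|X$ has no rank-$\ge 3$ spike restriction. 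Hence every spike restriction $S$ of $M$ uses an element of $E(M)-X$. If the tip $t$ of $S$ lies in $E(M)-X$, then $t \in U$ trivially; if $t\in X$, then at least one leg of $S$ through $t$ must use an extension element (otherwise all legs are triangles of $M|X$, giving the same star contradiction), and in the type-$(b)$ leg case $\{t,y,e\}$ we have $\{t,y\} \subseteq C_e \subseteq K_4^{(e)}$, so $t \in U_e \subseteq U$. The remaining ``cross-line'' cases, where the leg has the form $\{t,e_i,e_j\}$ and involves multiple extension elements, are handled by invoking Lemmas~\ref{2 and 3} and~\ref{2 and 2}, which force the combined structure inside a common $M(K_4)$- or $M(K_5)$-restriction, again placing $t$ within a core.

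The main obstacle is the careful enumeration of all spike restrictions when two or three extension elements participate jointly in the same spike, including higher-rank spikes and spikes whose legs are cross lines; in each such situation one must verify that the tip lies in $U$. The structural constraints from Sections~\ref{1-extensions} and~\ref{2-extensions}, together with the exclusion of $U_{2,5}$, $F_7$, and $R_9$, are precisely what make this enumeration finite and tractable.
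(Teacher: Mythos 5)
Your approach (a containment argument: every special point lies in a union $U$ of explicit ``cores'' of total size at most $21$) is genuinely different from the paper's, which is a pure counting argument: at most $\binom{3}{2}\cdot 2=6$ elements of $X$ are spanned by a pair of elements of $E(M)-X$ (else $U_{2,5}$ appears), at most $3\cdot 4=12$ elements of $X$ lie on a line of $M|X$ that spans an element of $E(M)-X$ (by Proposition~\ref{projections}), hence at most $18$ elements of $X$ lie on a long line of $M$ that is not a long line of $M|X$; since $M|X$ is graphic it has no spike restriction, so every special point of $X$ is among these $18$, and adding $|E(M)-X|\le 3$ gives $21$. Unfortunately your version has a genuine gap rather than being an alternative proof.

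The gap is the ``cross-line'' case: an element $x\in X$ with $\{x,e_i,e_j\}$ a circuit for two extension elements $e_i,e_j$. Such an $x$ need not lie in any core. For instance, if $e_1$ and $e_2$ are type-$(a)$, added to triangles $T_{e_1},T_{e_2}$ of $M|X$ sharing a vertex, then $\cl_M(T_{e_1}\cup T_{e_2})$ is an $M(K_5)$-flat, and $x$ could be any of its ten edges, only six of which lie in $U_{e_1}\cup U_{e_2}$; such an $x$ could then be a spike tip or lie on a $4$-point line $\{x,e_1,e_2,e_3\}$ (a shape missing from your enumeration of $4$-point lines). You propose to dispose of these cases via Lemmas~\ref{2 and 3} and~\ref{2 and 2}, but neither is available here: both require that $M$ have no $\U$-minor and have rank at least five (resp.\ six), whereas Lemma~\ref{3 case} excludes only $U_{2,5}$, $F_7$, $R_9$ and imposes no rank bound; moreover neither lemma addresses the two-type-$(a)$ configuration at all. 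Since your budget $|U|\le 3\cdot 7=21$ is already exhausted by the cores, there is no slack to absorb the (up to six) cross-line elements of $X$, so the containment ``every special point lies in $U$'' must be exact and is not established. The paper's counting avoids this entirely by charging each new long line either to a line of $M|X$ spanning a single extension element or to a pair of extension elements, rather than trying to locate special points inside fixed substructures.
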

\begin{proof}
If $r(M) \le 2$, then $|E(M)| \le |X| + |E(M) - X| \le 6$ and therefore $M$ has at most $6$ special points, so we may assume that $r(M) \ge 3$.
Since $|E(M) - X| \le 3$ and $M$ has no $U_{2,5}$-restriction, the points in $E(M) - X$ span at most three lines, each of which spans at most two points in $X$.
By Proposition~\ref{projections}, there are at most $12$ elements in $X$ that are on a line of $M|X$ that spans an element of $E(M) - X$.
Thus, there are at most $18$ elements in $X$ that are on a long line of $M$ that is not a long line of $M|X$.
Because no spike is graphic, $M|X$ has no spike restriction. 
Therefore, at most $18$ elements in $X$ are a tip of a spike or on a $4$-point line.
Since $|E(M) - X| \le 3$, it follows that $M$ has at most $21$ special points.
\end{proof}


We must work harder when $|E(M) - X|$ is not bounded.

\begin{lemma} \label{T_r case}
Let $M$ be a simple matroid with no minor in $\{U_{2,5}, F_7, R_9, \U\}$, and with a set $X$ so that $M|X \cong M(K_{r(M) + 1})$. 
If $|E(M) - X| \ge 4$ and each element in $E(M) - X$ is type-$(a)$, then $M$ has exactly one special point.
Moreover, this point is in every $3$-point line of $M|X$ which spans an element in $E(M) - X$.
\end{lemma}
\begin{proof}
We may assume that the ground set of $M|X$ is the edge set of the complete graph with vertex set $\{0,1,2, \dots, r(M)\}$.
Since each point in $E(M) - X$ is type-$(a)$, each is freely added to a $3$-point line of $M|X$.
Let $\cL$ denote the set of $3$-point lines of $M|X$ that span an element of $E(M) - X$.

\begin{claim} \label{x}
The lines in $\cL$ intersect in a common point.
\end{claim}
\begin{proof}
Suppose that the lines in $\cL$ do not intersect in a common point.
Since $M$ has no $(\U)$-minor, each pair of lines in $\cL$ has a common element.
Then since $|E(M) - X| \ge 4$, $M$ has a restriction $N$ that can be obtained from $M(K_4)$ by freely adding a point to each long line.
Let $z$ be an element of this $M(K_4)$-restriction.
Then $z$ is on at least two $4$-point lines of $N$.
Suppose $z$ is on three or more long lines of $N$. 
Then by Lemma \ref{R}, $N$ has a minor isomorphic to $R_9$ or $U_{2,5}$, a contradiction.
Therefore $z$ is on exactly two long lines of $N$.
But since $|N| = 10$ and $N$ has no $U_{2,5}$-restriction, this implies that $N/z \cong U_{2,5}$, a contradiction. 
\end{proof}

Without loss of generality, we may assume that the lines in $\cL$ intersect in the point $x = 01$.
Since $|\cL| \ge 4$, $x$ is special. 
We will show that $x$ is the unique special point of $M$.
Note that the points of $(M/x)|(X - \{x\})$ of size at least two form a basis of $\si((M/x)|(X - \{x\}))$; this implies that $E(M) - X$ is independent in $M$.

\begin{claim} \label{long lines}
Let $e_1, e_2 \in E(M) - X$, and, for each $i \in \{1,2\}$, let $L_i$ be a line in $\cL$ that spans $e_i$.
If $z \in X$ so that $\{z, e_1, e_2\}$ is a circuit of $M$, then $z$ is the unique element in $\cl_{M|X}(L_1 \cup L_2) - (L_1 \cup L_2)$.
Moreover, every $4$-point line of $M$ contains exactly three elements of $X$ including $x$.
\end{claim}
\begin{proof}
Certainly $z \in \cl_M(L_1 \cup L_2)$ since $\{z, e_1, e_2\}$ is a circuit.
If $z \in L_1$, then $L_1$ spans $e_2$.
But then $L_1$ is a flat of $M|X$ that spans $e_2$ but does not contain $L_2$, which contradicts Proposition~\ref{projections}(i).
Thus, $z \notin L_1$, and, by symmetry, $z \notin L_2$.
Since $|\cl_{M|X}(L_1 \cup L_2)| = 6$ and $|L_1 \cup L_2| = 5$, the element $z$ is unique.
As $E(M) - X$ is independent, it follows that each $4$-point line of $M$ contains exactly three elements of $X$ including $x$.
\end{proof}

By \ref{long lines}, if an element $y$ of $X$ is on a line in $\cL$, then each long line of $M$ through $y$ is spanned by a long line of $M|X$ through $y$.
As no spike is graphic, $y$ is not a tip of a spike restriction of $M$.
Moreover, if $y \ne x$, then $y$ is on exactly one $4$-point line of $M$, so $X$ contains at most one special point of $M$ that is on a line in $\cL$.

If an element $y$ of $X$ is not on a line in $\cL$, then $y$ is not on a $4$-point line, since $E(M) - X$ is independent.
By \ref{long lines}, the element $y$ is on at most one more long line in $M$ than in $M|X$, which implies that $y$ is a tip of at most one spike restriction of $M$.
Thus, $y$ is not a special point.
Therefore, $M$ has at most one special point in $X$, namely $x$.

Now, let $e \in E(M) - X$.
We will show that $e$ is not special.
We may assume that $e$ is freely added to the $3$-point line $L' = \{01, 02, 12\}$ of $M|X$.
By \ref{long lines}, each long line of $M$ through $e$ contains an element of $X$ that uses the vertex $2$.
Let $T$ be a transversal of the points of $M/e$ so that each element in $T$ is in $X$ and uses the vertex $2$; then $T$ is independent in $M|X$.
Also, $T$ contains at most one of $02$ and $12$, and so $T$ does not span $L'$ in $M|X$.
Since $e$ is freely added to $L'$, this implies that $T$ does not span $e$ in $M$.
Since $T$ is independent in $M$ and does not span $e$, the element $e$ is not a tip of a spike restriction of $M$.
Since $e$ is on exactly one $4$-point line of $M$, the element $e$ is not special.
Thus, $x$ is the unique special point of $M$, so the lemma holds.
\end{proof}


We need an analogous lemma for a different case.

\begin{lemma} \label{T'_r case}
Let $M$ be a simple matroid with no minor in $\{U_{2,5}, F_7, R_9, \U\}$, and with a set $X$ so that $M|X \cong M(K_{r(M) + 1})$. 
If there is a $3$-point line $\{x,y,z\}$ of $M|X$ so that each element in $E(M) - X$ is either freely added to $\{x,y,z\}$, or is spanned by both $2$-point lines of $M|X$ in some $4$-element circuit of $M|X$ containing $x$ and $y$,
then $M$ has at most two special points.
\end{lemma}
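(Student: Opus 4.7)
Identify $X$ with $E(K_{r(M)+1})$ on vertices $\{1,\ldots,r(M)+1\}$, with $x=12$, $y=23$, $z=13$. Because $M$ has no $U_{2,5}$-minor, at most one element of $E(M)\setminus X$ is freely added to the line $\{x,y,z\}$; call it $e$ if it exists. By Proposition~\ref{T'_r} and Lemma~\ref{2 and 2}, the remaining elements of $E(M)\setminus X$ are indexed by a subset $V\subseteq\{4,\ldots,r(M)+1\}$: for each $v\in V$, the element $f_v$ is spanned by both $\{x,3v\}$ and $\{y,1v\}$. The complete list of long lines of $M$ is then the triangle lines of $M|X$, the line $L_0\supseteq\{x,y,z\}$ (extended to the $4$-point line $\{x,y,z,e\}$ when $e$ exists), and, for each $v\in V$, the $3$-point lines $\{x,3v,f_v\}$ and $\{y,1v,f_v\}$. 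The modular-cut definitions of $e$ and of each $f_v$ preclude further long lines arising from interactions between extension elements, since $e$ is freely added to the unique rank-$2$ flat $\{x,y,z\}$ and each $f_v$'s modular cut in $M|X$ is generated only by the rank-$2$ flats $\{x,3v\}$ and $\{y,1v\}$.

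Next, we show that no element of $E(M)\setminus\{x,y\}$ is special. If $w\in X\setminus(\{x,y,z\}\cup\{1v,3v:v\in V\})$, then $w$ lies on only triangle lines of $M|X\cong M(K_{r(M)+1})$; since this graphic matroid has no $4$-point line and no spike restriction (no spike is graphic), $w$ is not special. The element $e$ lies on only one long line ($L_0$), so it is the tip of no spike restriction of rank at least three; together with the single $4$-point line through it, this shows $e$ is not special. Each $f_v$ lies on exactly two long lines, too few to be the tip of a spike of rank $\geq 3$, and on no $4$-point line. For $w=z$, the long lines through $z$ are the $M|X$ triangle lines $\{z,1v,3v\}$ for $v\in\{2,4,\ldots,r(M)+1\}$ and possibly $L_0$; in $\si(M/z)$, their representatives become edges of the graphic matroid $M(K_{r(M)})$ obtained from $M|X/z$ by merging vertices $1$ and $3$ into $\overline{13}$, and all these edges are incident to $\overline{13}$, forming a star. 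A star contains no circuit, so $z$ is the tip of no spike restriction, and with at most one $4$-point line, $z$ is not special.

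The technical heart is the case $w=3v$ for $v\in V$ (the case $w=1v$ is symmetric, swapping the roles of $x$ and $y$). Here $w$ is on no $4$-point line, so we must bound the number of spike restrictions with tip $3v$. The long lines through $3v$ are $\{3v,x,f_v\}$ together with the triangle lines $\{3v,3u,uv\}$ for $u\in\{1,2,4,\ldots,r(M)+1\}\setminus\{v\}$; the modular-cut structure rules out any further long line through $3v$ coming from $e$ or from $f_u$ with $u\neq v$. In $\si(M/3v)$, which contains the graphic matroid $M(K_{r(M)})$ from $M|X/3v$ (with vertices $3$ and $v$ merged into $\overline{3v}$), the representatives of these lines are the edge $12$ (from the extension line $\{3v,x,f_v\}$) and the edges $\overline{3v}u$ (from the triangle lines, using representatives $3u$ or $uv$, which coincide after contraction). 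Any circuit among these edges must include $12$---otherwise every chosen edge is incident to $\overline{3v}$, forming a star---and then must close at vertices $1$ and $2$ via $\overline{3v}1$ and $\overline{3v}2$; any longer cycle would force $\overline{3v}$ to have degree at least three, which is impossible in a cycle. Thus the unique circuit is the triangle on $\{12,\overline{3v}1,\overline{3v}2\}$, corresponding to a single rank-$3$ spike restriction with tip $3v$ and legs $\{3v,x,f_v\}$, $\{3v,z,1v\}$, $\{3v,y,2v\}$. Hence $3v$ is the tip of exactly one spike and is not special, and we conclude that $x$ and $y$ are the only candidates for special points. The main obstacle is precisely this cycle-counting inside $\si(M/3v)$, along with the supporting modular-cut book-keeping needed to rule out hidden long lines through $3v$ arising from interactions with the other extension elements.
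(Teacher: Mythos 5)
Your overall architecture matches the paper's (classify the long lines through each point, rule out spike tips and $4$-point lines everywhere except $x$ and $y$), but there is a genuine gap at the step you dismiss as bookkeeping: the assertion that ``the modular-cut definitions of $e$ and of each $f_v$ preclude further long lines arising from interactions between extension elements.'' The modular cut of the single-element extension $M|X+f_v$ records only which flats of $M|X$ span $f_v$; it says nothing about the relative position of two extension elements, so it cannot rule out circuits of the form $\{w,f_u,f_v\}$ or $\{w,e,f_v\}$ with $w\in X$. Such circuits genuinely occur: in the standard graphic representation (edge $ij\mapsto e_i-e_j$) your conditions force $f_v$ to be a multiple of $e_1-e_2+e_3-e_v$, so $f_u-f_v$ is parallel to the edge $uv$ and $\{uv,f_u,f_v\}$ is a triangle of $M$; likewise $f_v-(e_2-e_v)=e_1-2e_2+e_3$ lies on the line of $\{x,y,z\}$, so $\{2v,e,f_v\}$ can be a triangle as well. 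Consequently your enumeration of long lines fails in several places: $f_v$ may lie on as many as $|V|+1$ long lines rather than two (so ``too few legs to be a spike tip'' is unsupported), $e$ may lie on many long lines besides $L_0$, and the elements $2v$ and $uv$ that you file under ``only triangle lines of $M|X$'' may each lie on an extra long line. Your cycle count in $\si(M/3v)$ is correct as far as it goes, and $3v$ happens not to be spanned by any pair of extension elements, but that is the easy part of the lemma.

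The paper closes exactly this gap with \ref{two circuits} and \ref{line and circuit}: if $\{w,g,h\}$ is a circuit with $g,h\in E(M)-X$, then $w$ is the \emph{unique} element of $\cl_{M|X}(C_1\cup C_2)-(\cl_{M|X}(C_1)\cup\cl_{M|X}(C_2))$, respectively of $\cl_{M|X}(C)-(L\cup C)$. This pins down which elements of $X$ can acquire new long lines and shows that each acquires at most one, which is what actually makes the non-special status of the elements outside $L$ and outside the circuits of $\cC$ go through. For the extension elements themselves the paper then needs a further argument — a transversal of the long lines through $e$ (or $f_v$) chosen inside $X$ at a fixed vertex, shown to be independent and not to span the element — to exclude spike tips; your proof has no substitute for this because its premise that these elements lie on only one or two long lines is false. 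To repair your write-up you would need to add both ingredients: the uniqueness claims controlling two-extension-element circuits, and the transversal argument for the points of $E(M)-X$.
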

\begin{proof}
Let $L = \{x,y,z\}$.
We may assume that the ground set of $M|X$ is the edge set of the complete graph with vertex set $\{0,1,2, \dots, r(M)\}$, and that $(x,y,z) = (01, 02, 12)$.
By Proposition~\ref{projections}, each element in $E(M) - X$ that is not freely added to $L$ is type-$(b)$. 
Let $\cC$ denote the set of circuits $C$ of $M|X$ for which both $2$-point lines of $M|X$ in $C$ span a common element of $E(M) - X$.
By hypothesis, each circuit in $\cC$ contains $x$ and $y$.

The points of $(M/x)|(X - \{x\})$ of size at least two form a basis of $\si((M/x)|(X - \{x\}))$ because they all use the composite vertex obtained by identifying $0$ and $1$.
This implies that $E(M) - X$ is independent in $M$.
Since $E(M) - X$ is independent, there is at most one $4$-point line of $M$, namely $\cl_M(L)$.

\begin{claim} \label{two circuits}
Let $e_1, e_2 \in E(M) - X$ so that neither is freely added to $L$, and, for each $i \in \{1,2\}$, let $C_i$ be a circuit in $\cC$ that spans $e_i$.
If $w \in X$ so that $\{w, e_1, e_2\}$ is a circuit of $M$, then $w$ is the unique element in $\cl_{M|X}(C_1 \cup C_2) - (\cl_{M|X}(C_1) \cup \cl_{M|X}(C_2))$.
\end{claim}
\begin{proof}
Certainly $w \in \cl_{M}(C_1 \cup C_2)$, since $\{w, e_1, e_2\}$ is a circuit.
If $w \in \cl_{M|X}(C_1)$, then $\cl_{M|X}(C_1)$ is a flat of $M|X$ that spans $e_2$, but does not contain either $2$-point line of $M|X$ contained in $C_2$, which contradicts that $e_2$ is of type-$(b)$.
Thus, $w \notin \cl_M(C_1)$, and, by symmetry, $w \notin \cl_M(C_2)$.
As $C_1 \cap C_2 = \{x,y\}$, it follows that $|\cl_{M|X}(C_1 \cup C_2)| = 10$ and $|\cl_{M|X}(C_1) \cup \cl_{M|X}(C_2)| = 9$.
Thus, the element $w$ is unique.
\end{proof}

The following deals with a slightly different case.

\begin{claim} \label{line and circuit}
Let $e \in E(M) - X$ be freely added to $L$, and let $f \in E(M) - (X \cup e)$ be spanned by a circuit $C \in \cC$.
If $w \in X$ so that $\{w, e, f\}$ is a circuit of $M$, then $w$ is the unique element in $\cl_{M|X}(C) - (L \cup C)$.
\end{claim}
\begin{proof}
Certainly $w \in \cl_{M|X}(C)$, since $C$ spans $L$ and $\{w,e,f\}$ is a circuit.
If $w \in L$, then $\cl_M(L)$ contains $\{x,y,z,e,f\}$, a contradiction.
If $w \in C$, then there is a $2$-point line of $M|X$ that contains $w$ and spans $e$ but does not contain $L$, which contradicts the fact that $e$ is freely added to $L$.
Since $|\cl_{M|X}(C)| = 6$ and $|L \cup C| = 5$, the element $w$ is unique.
\end{proof}

If $w \in X - L$ is in a circuit in $\cC$, then $w$ is incident to one of the vertices $1$ and $2$.
This implies that there is no pair $(L, C)$ or $(C_1, C_2)$ with $C, C_1, C_2 \in \cC$ so that $w$ is the unique element in $\cl_{M|X}(C) - (L \cup C)$ or $\cl_{M|X}(C_1 \cup C_2) - (\cl_{M|X}(C_1) \cup \cl_{M|X}(C_2))$.
So, by \ref{two circuits} and \ref{line and circuit}, there is no pair of elements in $E(M) - X$ that span $w$.
Since $w \notin L$ and each circuit in $\cC$ contains $x$ and $y$,  the element $w$ is in exactly one circuit in $\cC$.
Therefore, $w$ is on exactly one long line, $L'$, of $M$ that is not a long line of $M|X$.
Thus, $w$ is a tip of at most one spike in $M$, otherwise $M$ has a spike not using the line $L'$, so the graphic matroid $M|X$ has a spike, a contradiction.
Since $w \notin L$, it is not on a $4$-point line, so it is not special.

Since $z = 12$, we see that $z \in L$ and $z \in \cl_{M|X}(C)$ for all $C \in \cC$.
So, by \ref{two circuits} and \ref{line and circuit}, there is no pair of elements in $E(M) - X$ that span $z$.
This implies that each long line of $M$ through $z$ is spanned by a long line of $M|X$ through $z$, so $z$ is not a tip of a spike.
Since $z$ is on at most one $4$-point line, it is not special.

If $w \in X - L$ is not in a circuit in $\cC$, then there is at most one pair $(L, C)$ or $(C_1, C_2)$ with $C, C_1, C_2 \in \cC$ so that $w$ is the unique element in $\cl_{M|X}(C) - (L \cup C)$ or $\cl_{M|X}(C_1 \cup C_2) - (\cl_{M|X}(C_1) \cup \cl_{M|X}(C_2))$.
So, by \ref{two circuits} and \ref{line and circuit}, there is at most one pair of elements in $E(M) - X$ that span $w$.
Then, since $w$ is not in a circuit in $\cC$ and is not in $L$, it is on at most one long line of $M$ that is not a long line of $M|X$. Thus,   it is a tip of at most one spike.
Since $w \notin L$, it is not on a $4$-point line, so it is not special.
Therefore, $M$ has at most two special points in $X$.

Now, let $e \in E(M) - X$.
We will show that $e$ is not special.
We consider two cases.
First assume that $e$ is freely added to $L$.
As $E(M) - X$ is independent, by \ref{line and circuit}, each long line of $M$ through $e$ contains an element of $X$ that uses the vertex $0$.
Let $T$ be a transversal of the non-trivial parallel classes of $M/e$ so that each element in $T$ is in $X$ and uses the vertex $0$.
Then $T$ is independent in $M|X$.
Also, $T$ contains at most one of $01$ and $02$, and so $T$ does not span $L$ in $M|X$, which implies that $T$ does not span $e$ in $M$.
Since $T$ is independent in $M$ and does not span $e$, the element $e$ is not a tip of a spike restriction of $M$.
Since $e$ is on at most one $4$-point line of $M$, the element $e$ is not special.

Now assume that $e$ is not freely added to $L$.
Then $e$ is a type-$(b)$ element and is on no $4$-point lines of $M$.
Let $C$ be a circuit in $\cC$ so that both $2$-point lines of $M|X$ in $C$ span $e$.
We may assume that $C = \{01, 13, 23, 02\}$.
Then $\{01, 23\}$ and $\{02, 13\}$ both span $e$.
By \ref{line and circuit} and \ref{two circuits}, each long line of $M$ through $e$ contains an element of $X$ that uses the vertex $3$.
Let $T$ be a transversal of the non-trivial parallel classes of $M/e$ so that each element in $T$ is in $X$ and uses the vertex $3$. 
Then $T$ is independent in $M|X$.
Note that $T$ contains $13$ and $23$, and that $T - \{13\}$ does not span $\{01,23\}$ or $\{02,13\}$ in $M$.
Since $e$ is of type-$(b)$, this implies that $T - \{13\}$ does not span $e$.
Then $T - \{13\}$ is an independent set that does not span $e$, and is a transversal of all but one long line of $M$ through $e$.
Thus, $e$ is a tip of at most one spike.
Since $e$ is not on a $4$-point line of $M$, this implies that $e$ is not special.
Thus, $M$ has no special point in $E(M) - X$, and at most two special points in $X$, so the lemma holds.
\end{proof}

\end{section}


\begin{section}{The Spanning-Clique Case} \label{spanning-clique case}
We prove the following result for matroids with a spanning-clique restriction.
Recall that a point $x$ of a simple matroid $M$ is \emph{special} if it is on at least two $4$-point lines, is a tip of at least two spike restrictions of $M$, or is a tip of a spike restriction of $M$ and is on a $4$-point line.

\begin{proposition} \label{spanning clique}
Let $M$ be a simple matroid of rank at least six with a spanning-clique restriction.
If $M$ has no minor in $\{U_{2,5}, F_7, R_9, \U\}$, then 
\begin{enumerate}[$(i)$]
\item $|M| \le {r(M) + 2 \choose 2} -2$, and

\item $M$ has at most $21$ special points.
\end{enumerate} 
\end{proposition}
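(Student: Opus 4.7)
The approach is to fix $X \subseteq E(M)$ with $M|X \cong M(K_{r(M)+1})$, apply Proposition~\ref{projections} to classify each $e \in E(M) - X$ as type-$(a)$ or type-$(b)$, and split into three cases based on $|E(M) - X|$ and which types appear. Since $|X| = \binom{r(M)+1}{2}$, conclusion $(i)$ reduces to showing $|E(M) - X| \le r(M) - 1$, because $\binom{r(M)+1}{2} + (r(M) - 1) = \binom{r(M)+2}{2} - 2$.

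If $|E(M) - X| \le 3$, then $(i)$ follows from $r(M) \ge 6$, and Lemma~\ref{3 case} bounds the number of special points by $21$. If $|E(M) - X| \ge 4$ and every element of $E(M) - X$ is type-$(a)$, then the argument inside the proof of Lemma~\ref{T_r case} exhibits an edge of $K_{r(M)+1}$ lying in every triangle corresponding to a type-$(a)$ element; distinct type-$(a)$ elements correspond to distinct triangles (two elements freely added to the same $3$-point line yield a $U_{2,5}$-restriction), and there are $r(M) - 1$ triangles through a given edge, so $|E(M) - X| \le r(M) - 1$ and Lemma~\ref{T_r case} supplies the special-point bound.

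The main case is $|E(M) - X| \ge 4$ with at least one type-$(b)$ element, where the goal is to verify the hypothesis of Lemma~\ref{T'_r case}: that there is a $3$-point line $\{x,y,z\}$ of $M|X$ so that each element of $E(M) - X$ either is freely added to $\{x,y,z\}$ or is type-$(b)$ via a $4$-element circuit of $M|X$ containing both $x$ and $y$. By Lemma~\ref{2 and 2}, the $4$-cycles associated with type-$(b)$ elements pairwise share two edges inside a common $M(K_5)$-restriction, so any two such cycles meet either in a $2$-edge path (sharing three vertices) or a $2$-edge matching (sharing four vertices, hence living on the same four vertices of $K_{r(M)+1}$). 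The second alternative must be ruled out under $|E(M) - X| \ge 4$: three pairwise-matching cycles are necessarily the three Hamilton cycles of a common $K_4$, a direct intersection count shows that no further $4$-cycle on five vertices can share two edges with all three, and any additional type-$(a)$ element on those four vertices can be contracted to collapse a triangle of $K_4$, yielding a rank-$2$ minor with at least five parallel classes and hence a $U_{2,5}$-minor of $M$. Thus all type-$(b)$ circuits must share a common $2$-edge path $\{x, y\}$ meeting at a single vertex, and Lemma~\ref{2 and 3} forces each type-$(a)$ triangle to contain both $x$ and $y$, making it the unique triangle $\{x, y, z\}$ closing off this path. With this structure, at most one type-$(a)$ element sits on $\{x, y, z\}$ (by the $U_{2,5}$-exclusion) and the $4$-cycles through $\{x, y\}$ in $K_{r(M)+1}$ number at most $r(M) - 2$ (one per choice of the fourth vertex), giving $|E(M) - X| \le 1 + (r(M) - 2) = r(M) - 1$ and Lemma~\ref{T'_r case} providing the special-point bound of two.

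The heart of the argument is the structural step just described: ruling out the four-vertex ``matching'' configuration via the contraction argument. This requires keeping careful track of how the type-$(b)$ modular cuts interact when two circuits share a matching, and verifying that contracting a type-$(a)$ element in such a configuration really produces the claimed collapse to a rank-$2$ minor with enough parallel classes to force $U_{2,5}$. Once this step is done, the case analysis and the counting that follows are routine.
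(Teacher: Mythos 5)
Your overall skeleton is the same as the paper's: reduce $(i)$ to showing $|E(M)-X|\le r(M)-1$, classify the elements of $E(M)-X$ via Proposition~\ref{projections}, and feed the resulting configurations into Lemmas~\ref{3 case}, \ref{T_r case}, and \ref{T'_r case}. The genuine gap is in your main case, at the sentence ``Thus all type-$(b)$ circuits must share a common $2$-edge path $\{x,y\}$.'' Ruling out the matching alternative only tells you that the circuits in question \emph{pairwise} intersect in $2$-edge paths; it does not produce a common one. Indeed, in $K_5$ the three $4$-cycles with edge sets $\{12,23,34,41\}$, $\{12,23,35,51\}$ and $\{34,41,15,53\}$ pairwise meet in $2$-edge paths while their triple intersection is empty. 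This is precisely the configuration the paper isolates in its $|\cL|=0$ sub-case (``no shared triangle among all of these restrictions''), where it is dispatched by showing that it forces exactly three circuits and no type-$(a)$ elements, hence $|E(M)-X|=3$. You cannot borrow that conclusion, because you are working under the hypothesis $|E(M)-X|\ge 4$: you would have to show that no fourth type-$(b)$ circuit and no type-$(a)$ element can coexist with such a cyclic triple, and your write-up does not address this. Relatedly, when only one type-$(b)$ circuit is present the phrase ``common $2$-edge path'' is not even well-defined, and Lemma~\ref{2 and 3} by itself only places each type-$(a)$ triangle in a common $M(K_4)$-restriction with each circuit; pinning down a single distinguished pair $\{x,y\}$ there needs the extra counting the paper performs in its $|\cL|\ge 3$ and $|\cL|=1$ cases.

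Your treatment of the matching configuration is also under-specified. The paper's Claim~\ref{double 2} handles it by (a) using Lemma~\ref{2 and 3} to force any triangle of $\cL$ inside $C_1\cup C_2$ and then applying Lemma~\ref{R} to a point of that triangle lying on the shared $2$-point line, which sits on two $4$-point lines and a further long line; and (b) showing that any additional circuit of $\cC$ must contain the shared matching and hence lie in the same $M(K_4)$-restriction, so $|\cC|\le 3$. Your sketch covers only the sub-case where a type-$(a)$ element is present, and does so via an unverified claim that contracting it produces a rank-$2$ minor with five parallel classes; to make your case split work you must also exclude a fourth type-$(b)$ circuit attached to the $K_4$ from outside. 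Until the common-path (Helly-type) step and the matching case are argued in full, the main case of your proof does not close.
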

\begin{proof}
Let $M|X \cong M(K_{r(M)+1})$.
Note that $|M| \le {r(M) + 2 \choose 2} -2$ if and only if $|E(M) - X| \le r(M) - 1$.
By Proposition~\ref{projections}, each element of $E(M) - X$ is either freely added to a $3$-point line of $M|X$, or corresponds to the modular cut generated by a pair of $2$-point lines of $M|X$ whose union is a circuit.
Let $\cL$ be the set of $3$-point lines of $M|X$ that span an element in $E(M) - X$.
Let $\cC$ be the set of $4$-element circuits $C$ of $M|X$ for which both $2$-point lines of $M|X$ contained in $C$ span a common element of $E(M) - X$.

Clearly each line in $\cL$ spans exactly one element of $E(M) - X$, or else $M$ has a $U_{2,5}$-restriction.
If there is a pair $\{L, L'\}$ of $2$-point lines $L$ and $L'$ of $M|X$ whose union is a circuit so that $L$ and $L'$ each span elements $e$ and $f$ in $E(M) - X$, then $L \cup \{e,f\}$ and $L' \cup \{e,f\}$ are distinct lines of $M$ that intersect in two points, a contradiction.
Thus, $|E(M) - X| = |\cL| + |\cC|$.

We first deal with the case when two circuits in $\cC$ span each other.

\begin{claim} \label{double 2}
If there are distinct circuits $C_1$ and $C_2$ in $\cC$ whose union is contained in an $M(K_4)$-restriction of $M|X$, then $|E(M) - X| \le 3$, and $(i)$ and $(ii)$ hold.
\end{claim}
\begin{proof}
For each $i \in \{1,2\}$, let $e_i$ be the element of $M$ that is spanned by each $2$-point line of $M|X$ contained in $C_i$.
Since $C_1 \cup C_2$ is contained in an $M(K_4)$-restriction of $M|X$, there is a line $\{x, y\}$ of $M|X$ contained in both $C_1$ and $C_2$.

Suppose that $\cL \ne \varnothing$, and let $L \in \cL$.
Then $L \subseteq C_1 \cup C_2$, or else either $(L, C_1)$ or $(L, C_2)$ violates Lemma~\ref{2 and 3}.
Either $x$ or $y$ is in $L$, since every triangle of $K_4$ intersects every size-$2$ matching; assume without loss of generality that $x \in L$.
But then $x$ is on the $4$-point lines $\cl_M(L)$ and $\{x,y,e_1,e_2\}$, and also a $3$-point line other than $L$ in $C_1 \cup C_2$, a contradiction to Lemma~\ref{R}.
Thus, $\cL = \varnothing$.

If there is some circuit $C_3 \in \cC$ that is not contained in $C_1 \cup C_2$,
then there are at most two elements of $C_3$ in $C_1 \cup C_2$.
Since $|C_1 \cap C_3| = 2$ and $|C_2 \cap C_3| = 2$ by Lemma~\ref{2 and 2}, this implies that $C_3$ contains $\{x,y\}$.
But the only $4$-element circuits of $M|X$ that contain $\{x,y\}$ are $C_1$ and $C_2$.
Thus, each circuit in $\cC$ is contained in $C_1 \cup C_2$.
Since $C_1 \cup C_2$ is contained in an $M(K_4)$-restriction of $M|X$, it follows that $|\cC| \le 3$.
Then $|E(M) - X| \le 3$ so $(i)$ holds, and $(ii)$ holds by Lemma~\ref{3 case}.
\end{proof}

Next we treat the case when the lines in $\cL$ do not share a common element.

\begin{claim} \label{common edge}
If $\cL$ contains a set of three lines that do not intersect in a common point, then $|E(M) - X| = 3$, and $(i)$ and $(ii)$ hold.
\end{claim}
\begin{proof}
Since $M$ has no $(U_{2,4} \oplus U_{2,4})$-minor, each pair of lines in $\cL$ share a common element.
Since the lines in $\cL$ have no common point of intersection, there is a set $Z \subseteq X$ so that $M|Z \cong M(K_4)$ and $Z$ contains each line in $\cL$.
Suppose $|\cL| \ge 4$.
Then Lemma \ref{T_r case} shows that the lines in $\cL$ intersect in a common element, a contradiction.
Thus $|\cL| = 3$, and so there are three elements $x,y,z \in Z$ that are each on two lines in $\cL$.

Suppose $|\cC| \ge 1$.
By Lemma~\ref{2 and 3}, each circuit in $\cC$ is contained in $Z$, and so each circuit in $\cC$ contains one of $x,y,z$, since $|Z| = 6$.
But then one of $x,y,z$ is on two $4$-point lines and a $3$-point line of $M|\cl_M(Z)$, so $M$ has an $R_9$-restriction or a $U_{2,5}$-minor by Lemma~\ref{R}.
This is a contradiction, so $\cC = \varnothing$, and so $|E(M) - X| = 3$.
Then $|E(M) - X| \le 3$ so $(i)$ holds, and $(ii)$ holds by Lemma~\ref{3 case}.
\end{proof}


We now consider four cases, depending on the size of $\cL$.
Suppose first that $|\cL| \ge 3$. 
By \ref{common edge}, the lines in $\cL$ share a common element $x$; thus, $|\cL| \le r(M) - 1$.
Since $|\cL| \ge 3$, the set $\cC$ is empty, or else there is a line $L \in \cL$ and a circuit $C \in \cC$ so that $(L,C)$ violates Lemma~\ref{2 and 3}.
Then $|E(M) - X| = |\cL| \le r(M) -1 $, and $(i)$ holds.
Also, $(ii)$ holds by Lemma~\ref{T_r case} if $|\cL| \ge 4$ and by Lemma~\ref{3 case} if $|\cL| = 3$.

Next assume that $|\cL| = 2$.
The union of the two lines in $\cL$ is contained in an $M(K_4)$-restriction $M|Z$ of $M|X$, and by Lemma~\ref{2 and 3}, each circuit in $\cC$ is contained in $Z$.
By Lemma~\ref{R}, the element $x$ that is on both lines in $\cL$ is in no circuit in $\cC$.
This implies that $|\cC| \le 1$, and so $|E(M) - X| \le 3 \le r(M) - 1$, and $(ii)$ holds by Lemma~\ref{3 case}.

Now suppose that $|\cL| = 1$.
Let $L \in \cL$, and let $e$ be the element of $E(M) - X$ that is spanned by $L$.
By \ref{double 2}, we may assume that each circuit in $\cC$ spans at most one element of $E(M) - X$ other than $e$.
By Lemma~\ref{2 and 3}, each circuit in $\cC$ contains two elements of $L$.
If, for two different circuits $C_1$ and $C_2$ in $\cC$, we have $C_1 \cap L \ne C_2 \cap L$, then, as $r(C_1 \cup C_2) = 4$, we see that $|C_1 \cap C_2| = 1$, which contradicts Lemma~\ref{2 and 2}.
Thus, there are elements $x$ and $y$ in $L$ that are contained in each circuit in $\cC$.
This implies that $|\cC| \le r(M) - 2$, and so $|E(M) - X| \le r(M) - 1$.
Also, $(ii)$ holds by Lemma~\ref{T'_r case}.

Finally, suppose that $|\cL| = 0$.
By \ref{double 2}, we may assume that each $M(K_4)$-restriction of $M|X$ spans at most one element of $E(M) - X$.
By Lemma~\ref{2 and 2}, each pair of these $M(K_4)$-restrictions has a common triangle.
If there is no shared triangle among all of these restrictions, then there are exactly three $M(K_4)$-restrictions of $M|X$ that span an element of $E(M) - X$, and they are contained in an $M(K_5)$-restriction of $M|X$.
Then $|E(M) - X| = 3$, so $(i)$ and $(ii)$ hold, using Lemma~\ref{3 case}. 
Otherwise, there is a common triangle $L$ contained in each $M(K_4)$-restriction that spans an element of $E(M) - X$.
By the same reasoning at the end of the previous case, there are elements $x$ and $y$ in $L$ that are contained in each circuit in $\cC$, and so $(i)$ and $(ii)$ hold.
\end{proof}

\end{section}


\begin{section}{The Proofs of the Main Results} \label{proofs}
In this section, we prove Theorems \ref{main matroids} and \ref{main}.
We use two results of Geelen, Nelson, and Walsh from~\cite{complex}. 
To state these results, we need a notion of matroid connectivity.
A \emph{vertical $j$-separation} of a matroid $M$ is a partition $(X,Y)$ of $E(M)$ so that $r(X) + r(Y) - r(M) < j$ and $\min(r(X), r(Y)) \ge j$.
A matroid $M$ is \emph{vertically $k$-connected} if it has no vertical $j$-separation with $j<k$.
The first result~\cite[Theorem 6.1.3]{complex} finds structure in a minimal counterexample.
The second, which is a consequence of Theorem 7.6.1 in \cite{complex}, allows us to move from a clique minor to a spanning-clique restriction.

\begin{theorem}\label{new reduction}
There is a function $r_{\ref{new reduction}}\colon \bR^6\to \bZ$ so that, for all integers $\ell \ge 2$ and $t,s\ge 1$, and any real polynomial $p(x)=ax^2+bx+c$ with $a>0$, if $M\in\cU(\ell)$ satisfies $r(M)\ge r_{\ref{new reduction}}(a,b,c,\ell,t,s)$ and $\elem(M)>p(r(M))$, then $M$ has a minor $N$ with $\elem(N)>p(r(N))$ and $r(N)\ge t$ such that either
\begin{enumerate}[(1)]
\item $N$ has a spanning-clique restriction, or
\item $N$ is vertically $s$-connected and has an $s$-element independent set $S$ so that $\elem(N)-\elem(N/e)>p(r(N))-p(r(N)-1)$ for each $e\in S$.
\end{enumerate}
\end{theorem}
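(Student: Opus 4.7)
The theorem packages Theorem~7.6.1 of \cite{complex} into a form convenient for our applications, so my plan is to apply that theorem as a black box and refine its output with a minimality/connectivity argument. The threshold $r_{\ref{new reduction}}(a,b,c,\ell,t,s)$ will be chosen sufficiently large that all the rank losses in the reductions below still leave a minor of rank at least $\max(t+1, s)$. First, apply Theorem~7.6.1 of \cite{complex} to $M$; provided $r(M)$ is large, this produces a minor $N_1$ with $\elem(N_1) > p(r(N_1))$ whose structure is concentrated either on a spanning-clique restriction (yielding conclusion (1) by taking $N := N_1$) or on a core substructure amenable to further reductions.

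In the second situation, choose $N$ to minimize $r(N)$ among minors of $N_1$ satisfying $r(N) \geq \max(t+1, s)$ and $\elem(N) > p(r(N))$. Then, for any non-loop element $e$ of $N$, we have $r(N/e) = r(N) - 1 \geq t$, and the minimality of $N$ yields $\elem(N/e) \leq p(r(N) - 1)$, so
\[
\elem(N) - \elem(N/e) > p(r(N)) - p(r(N) - 1)
\]
for every non-loop $e$. Since $r(N) \geq s$, any $s$-element independent subset of $E(N)$ serves as the required set $S$, and the density inequality of conclusion (2) holds on every element of $S$ automatically.

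It remains to force vertical $s$-connectivity. Suppose for contradiction that $(X,Y)$ is a vertical $j$-separation of $N$ with $j < s$; then $r(X) + r(Y) \leq r(N) + s - 1$, and every point of $N$ contributes a point to at least one of $N|X$ and $N|Y$, giving $\elem(N|X) + \elem(N|Y) \geq \elem(N) > p(r(N))$. When both $r(X)$ and $r(Y)$ are comparable to $r(N)$, convexity of $p$ (ensured by $a > 0$) implies $p(r(X)) + p(r(Y)) < p(r(N))$, so at least one of $N|X$, $N|Y$ already satisfies the density bound, contradicting the minimality of $N$. When one side has small rank, a direct subtraction argument suffices. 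The main obstacle lies precisely in this two-case analysis: one must verify that in each subcase the relevant restricted minor has rank at least $\max(t+1, s)$, and that the convexity estimates remain strict even near the boundary between the ``small'' and ``large'' side regimes; balancing these thresholds is what drives the precise choice of $r_{\ref{new reduction}}$.
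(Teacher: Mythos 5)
The paper does not actually prove this statement: Theorem~\ref{new reduction} is imported from Geelen, Nelson, and Walsh \cite{complex} (it is the ``first result'' quoted at the start of Section~\ref{proofs}; note it is Theorem~\ref{spikes}, not this one, that the authors describe as a consequence of Theorem~7.6.1 of \cite{complex}). So your plan of applying Theorem~7.6.1 as a black box to obtain a minor whose structure is ``concentrated either on a spanning-clique restriction or on a core substructure'' is essentially circular: the dichotomy between a spanning-clique outcome and a dense, well-connected core is the entire content of the theorem to be proved, not something you may take as input.

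The refinement you build on top of the black box also has a concrete gap. You minimize $r(N)$ over minors with $r(N)\ge\max(t+1,s)$ and $\elem(N)>p(r(N))$, then argue that $\elem(N/e)\le p(r(N)-1)$ for every non-loop $e$. But when $r(N)=\max(t+1,s)$, the minor $N/e$ falls outside the family you minimized over, so minimality yields no contradiction and the degree inequality does not follow; and there is no reason the unconstrained rank-minimal dense minor has rank at least $t$. Indeed, $\cU(\ell)$ contains bounded-rank matroids far denser than any fixed quadratic (Kung's bound $\elem(M)\le(\ell^{r}-1)/(\ell-1)$ is exponential and is attained by projective geometries over small fields), so both your minimization and your connectivity recursion --- in which one of $N|X$, $N|Y$ is dense but may have rank as small as $j<s$ --- can bottom out at a minor of tiny rank, where nothing can be concluded. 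Escaping this bounded-rank trap is exactly the hard content of the theorem: in \cite{clique,complex} it is handled by showing that a stack of many bounded-rank dense pieces forces a spanning clique (whence outcome $(1)$), via machinery your argument does not engage with. The vertical-connectivity step is likewise only sketched, and the ``small side'' case you defer is where this same bounded-rank problem reappears.
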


\begin{theorem} \label{spikes}
There are functions $s_{\ref{spikes}} \colon \bZ \to \bZ$ and $r_{\ref{spikes}} \colon \bZ^3 \to \bZ$ so that, for all integers $\ell \ge 2$, $k \ge 1$, and $m \ge 3$,
if $M$ is a vertically $s_{\ref{spikes}}(k)$-connected matroid with no $U_{2,\ell+2}$-minor and no Reid geometry minor, and with an $M(K_{r_{\ref{spikes}}(\ell, m, k)+1})$-minor and $2k$ special points, then $M$ has a simple minor $N$ of rank at least $m$ with an $M(K_{r(N)+1})$-restriction and $k$ special points.
\end{theorem}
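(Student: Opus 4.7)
The plan is to derive Theorem~\ref{spikes} from Theorem~7.6.1 of \cite{complex} by associating a bounded-size witness to each special point of $M$ and arranging that enough witnesses remain intact after passing to the minor. First, for each special point $x$ of $M$, I would fix a witness $W_x \subseteq E(M)$ consisting of the union of at most two $4$-point lines through $x$ and at most two rank-$4$ spike restrictions with tip $x$. Any rank-$r$ spike restriction with $r \ge 5$ and tip $x$ can be reduced to a rank-$(r-1)$ spike with the same tip by contracting one leg element and simplifying (the other element of the leg becomes parallel to $x$ and is removed by simplification); iterating, one may stop at rank $4$, so without loss of generality the spike restrictions forming $W_x$ have rank $4$ and size $9$. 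Consequently $|W_x| \le 22$, and $x$ is a special point of every simple minor of $M$ in which the matroid structure on $W_x$ is preserved---meaning no element of $W_x$ is deleted, and the contracted elements lie outside $\cl_M(W_x)$.

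Next I would invoke Theorem~7.6.1 of \cite{complex} with $s_{\ref{spikes}}(k)$ set to the connectivity it demands (augmented, if necessary, to accommodate a protected set of bounded size) and with $r_{\ref{spikes}}(\ell, m, k)$ chosen so that the output minor $N_0$ has rank at least $m + 22k$ and carries an $M(K_{r(N_0)+1})$-restriction. Because $M$ has no $U_{2,\ell+2}$-minor, each closure $\cl_M(W_x)$ has size bounded by a function of $\ell$ alone, so the union $\bigcup_x \cl_M(W_x)$ is a set of size bounded in terms of $k$ and $\ell$. The vertical connectivity hypothesis allows us to designate this union as a protected set whose structure the minor operations avoid disturbing, so any surviving $W_x$ continues to witness the specialness of $x$ in the output minor.

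The main obstacle is ensuring that at least $k$ of the $2k$ witnesses actually survive the minor operations intact, since the contraction set produced by Theorem~7.6.1 of \cite{complex} is not a priori bounded. The cleanest way around this is a pigeonhole or iterative argument: if one application of Theorem~7.6.1 of \cite{complex} breaks more than $k$ witnesses, select a broken witness $W_x$, contract or delete its closure in $M$ (bounded in size, and losing only bounded rank thanks to the vertical connectivity hypothesis), and reapply the theorem. The number of intact witnesses cannot exceed $2k$ and strictly increases relative to the broken-witness budget at each step, so after at most $k$ iterations at least $k$ witnesses are intact. The total rank loss across the iterations is at most a constant multiple of $k$, absorbed by the slack built into $r_{\ref{spikes}}(\ell, m, k)$, yielding the required minor $N$ of rank at least $m$ with $k$ special points.
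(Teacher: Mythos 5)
The paper offers no proof of Theorem~\ref{spikes} at all: it is imported wholesale as ``a consequence of Theorem 7.6.1 in \cite{complex}'', so there is no internal argument to measure yours against, and your proposal must stand on its own. It does not. The first genuine gap is that your bounded-witness scheme cannot certify ``tip of a spike restriction''. Your reduction of a rank-$r$ spike to a rank-$4$ spike proceeds by \emph{contracting} leg elements, which yields a spike restriction of a minor of $M$, not of $M$ itself: the restriction of $M$ to the tip and four legs of a rank-$r$ spike with $r \ge 5$ has rank $5$, and after contracting the tip the four legs form four independent parallel pairs rather than a circuit, so that $9$-element set is not a spike. Hence ``without loss of generality the spike restrictions forming $W_x$ have rank $4$ and size $9$'' is false as a statement about restrictions of $M$, and there is no bounded subset $W_x \subseteq E(M)$ whose survival as a restriction guarantees that $x$ is a spike tip in the output minor $N$; that depends on which of the unboundedly many leg elements the passage to $N$ happens to contract, which you do not control, and protecting the closure of the full spike is impossible since its rank, and hence $|\cl_M(W_x)|$, is unbounded.

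The second gap is the iterative repair at the end. After one application of Theorem 7.6.1 of \cite{complex} breaks more than $k$ witnesses, you contract or delete the closure of a broken witness and reapply; but the new application is to a different matroid and may break an entirely different collection of more than $k$ witnesses, and the contraction itself can damage further witnesses by creating parallel pairs or loops inside them. The claim that the number of intact witnesses ``strictly increases relative to the broken-witness budget at each step'' is not supported by any monotone quantity, so termination after $k$ rounds is unjustified. Note finally that the conclusion of Theorem~\ref{spikes} asks only that $N$ have $k$ special points, not that $k$ of the original $2k$ special points of $M$ remain special; the halving from $2k$ to $k$ is a hint that the argument in \cite{complex} does not track individual protected witnesses in the way you propose, and a correct derivation would need to engage with what Theorem 7.6.1 of \cite{complex} actually provides rather than with features (protected sets, prescribed rank slack) assumed here without verification.
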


In order to apply Theorem~\ref{spikes}, we need the following result of Geelen and Whittle~\cite{GW}.

\begin{theorem} \label{clique minor}
There is a function $\alpha_{\ref{clique minor}}\colon \bZ^2\to \bZ$ so that, for all integers $\ell,t\ge 2$, if $M\in \cU(\ell)$ satisfies $\elem(M)>\alpha_{\ref{clique minor}}(\ell,t) \cdot r(M)$, then $M$ has an $M(K_{t+1})$-minor.
\end{theorem}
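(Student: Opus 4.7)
The plan is to prove the contrapositive by induction on $t$: if $M\in\cU(\ell)$ has no $M(K_{t+1})$-minor, then $\elem(M)\le \alpha_{\ref{clique minor}}(\ell,t)\cdot r(M)$. The base case $t=2$ is immediate, since having no $M(K_3)$-minor means every line of $M$ has at most two points, so every three elements of $M$ are independent, giving $\elem(M)\le r(M)$ and $\alpha_{\ref{clique minor}}(\ell,2)=1$.

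For the inductive step, assume $\alpha:=\alpha_{\ref{clique minor}}(\ell,t)$ is defined and consider $M\in\cU(\ell)$ with $\elem(M)>\beta\cdot r(M)$ for a sufficiently large $\beta$; the aim is to find an $M(K_{t+2})$-minor. First I would invoke Kung's theorem, which caps every rank-$2$ restriction of a matroid in $\cU(\ell)$ at $\ell+2$ points, to guarantee that $M$ has many triangles: double-counting pairs of distinct points together with the third points of the lines they span produces a triangle count of order $\elem(M)^{2}/\ell$. Averaging over elements then yields some $e\in E(M)$ contained in enough triangles that $\si(M/e)$ satisfies $\elem(\si(M/e))>\alpha\cdot r(M/e)$, provided $\beta$ is chosen polynomially in $\alpha$ and $\ell$. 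Applying the inductive hypothesis to $\si(M/e)\in\cU(\ell)$ then produces an $M(K_{t+1})$-minor $N$ of $M/e$, which lifts to a minor of $M$ containing $N$ together with the element $e$.

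The main obstacle is promoting this to an $M(K_{t+2})$-minor in $M$, with $e$ playing the role of a newly added vertex. For this, $e$ must be coplanar in $M$ with each pair among some $t+1$ ``vertices'' of $N$, so that appropriate contractions inject all the needed new triangles. To force such compatibility, I would inflate $\beta$ further so that in $\si(M/e)$ one can locate not a single $M(K_{t+1})$-minor but a large family of them on pairwise-different element sets; a Ramsey-type pigeonhole then extracts one whose vertex set is aligned with $e$ in $M$. This yields $\alpha_{\ref{clique minor}}(\ell,t+1)$ as a polynomial in $\alpha_{\ref{clique minor}}(\ell,t)$ and $\ell$. The calibration of the constants across inductive levels, and the verification that Kung's cap on line length is strong enough to hold the density linear in $r$ rather than let it grow polynomially or exponentially as in the unrestricted growth-rate theorem, are the technical crux of the argument; everything else is bookkeeping around the averaging and Ramsey steps.
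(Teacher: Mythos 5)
The paper does not prove this statement; it is imported verbatim from Geelen and Whittle \cite{GW}, so there is no internal proof to compare against and your proposal must stand on its own. It does not. The density bookkeeping in your first inductive step is fine in spirit: lines of a matroid in $\cU(\ell)$ have at most $\ell+1$ points, so the parallel classes of $M/e$ have at most $\ell$ points of $M$, giving $\elem(\si(M/e))\ge(\elem(M)-1)/\ell$, and hence density above $\alpha$ after one contraction provided $\beta$ is roughly $\ell\alpha$. (Your stated count of ``order $\elem(M)^2/\ell$ triangles'' does not follow from double-counting pairs of points --- that only bounds the number of lines, almost all of which could be $2$-point lines --- but this count is not actually needed for the contraction step.)

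The genuine gap is the step you yourself flag as the main obstacle, and it is worse than an unfinished detail: the plan is arithmetically impossible as described. A new vertex of $K_{t+2}$ carries $t+1$ new edges, so an $M(K_{t+2})$-minor of $M$ must use $t+1$ elements beyond the ${t+1 \choose 2}$ elements of the clique minor $N$ found in $M/e$, each forming a triangle with the correct pair of old edge-elements and the correct triangles with one another. The single contracted element $e$ can play the role of at most one of these $t+1$ elements, and contractions do not create the other $t$; your proposal never identifies where they come from. The ``Ramsey-type pigeonhole over a large family of clique minors'' is an appeal to a mechanism rather than an argument --- it is not specified what is being coloured, or why a monochromatic family would supply the missing elements and coplanarities. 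This is precisely where the entire difficulty of the theorem lives. Geelen and Whittle's actual proof handles it by first passing to a \emph{round} (highly vertically connected) minor that is still dense; roundness is the tool that controls how a clique minor found after contraction sits inside the original matroid and allows the clique to be rebuilt and extended. Without that idea, or an explicit substitute for it, the induction does not close.
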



We now prove a restatement of Theorem~\ref{main matroids} and then Theorem~\ref{main}.

\begin{theorem} \label{restatement}
There is a constant $n_1$ so that if $M$ is a simple matroid of rank at least $n_1$ with no minor in $\{U_{2,5}, F_7, R_9, \U\}$, then $|M| \le {r(M) + 2 \choose 2} -2$.
\end{theorem}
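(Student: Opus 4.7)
I would argue by contradiction, feeding the putative counterexample into Theorem~\ref{new reduction} in order to reduce to the spanning-clique case already handled by Proposition~\ref{spanning clique}. Suppose $M$ is a simple matroid of rank $r(M) \ge n_1$ with no minor in $\{U_{2,5}, F_7, R_9, \U\}$ and $\elem(M) > p(r(M))$, where $p(x) = \binom{x+2}{2} - 2$. Then $M \in \cU(3)$, and a direct computation gives $p(r) - p(r-1) = r+1$, which will govern the density threshold in case (2) below.

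I would apply Theorem~\ref{new reduction} with $\ell = 3$, $t = 6$, and a sufficiently large constant $s$ (chosen so that Theorem~\ref{spikes} may be invoked later). This produces a minor $N$ of $M$ with $\elem(N) > p(r(N))$ and $r(N) \ge 6$, falling into one of two cases. If $N$ has a spanning-clique restriction (case (1)), then Proposition~\ref{spanning clique}(i) immediately gives $\elem(N) \le p(r(N))$, a contradiction. So I may assume case (2): $N$ is vertically $s$-connected and carries an independent set $S \subseteq E(N)$ of size $s$ with $\elem(N) - \elem(N/e) > r(N) + 1$ for every $e \in S$.

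To handle case (2), I would unpack the density condition. Since $N$ is simple and has no $U_{2,5}$-minor, every long line has $3$ or $4$ points, and a short parallel-class count yields $\elem(N) - \elem(N/e) = 1 + \sum_L (|L|-2)$, the sum being over long lines $L$ through $e$. Hence each $e \in S$ lies on many long lines, with total weight exceeding $r(N)$. I would then argue that every such $e$ forces a special point of $N$: if two $4$-point lines meet $e$, then $e$ is already special; otherwise the abundance of $3$-point lines through $e$, combined with vertical $s$-connectivity, lets me extract two independent transversals of the parallel classes of $N/e$ that each support a spike circuit, making $e$ the tip of two spike restrictions (while avoiding $F_7$ and $R_9$ by the hypothesis). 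Taking $s$ large enough that this produces at least $44$ special points in $N$, I next apply Theorem~\ref{clique minor} to the quadratic bound $\elem(N) > p(r(N))$ to get an $M(K_{m+1})$-minor of any prescribed rank, and then invoke Theorem~\ref{spikes} with $k = 22$ to obtain a simple minor $N'$ of rank at least $6$ having a spanning-clique restriction and at least $22$ special points. This contradicts Proposition~\ref{spanning clique}(ii).

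The main obstacle will be the special-point accounting in case (2): translating the single inequality $\elem(N) - \elem(N/e) > r(N)+1$ into the existence of a \emph{second} non-graphic long structure through $e$ (a second $4$-point line or a second spike tip). Finding one such structure is essentially free, but upgrading to two, while using vertical $s$-connectivity to guarantee the needed independent spike circuit in $N/e$ and steering clear of the forbidden $F_7$, $R_9$, and $\U$ configurations, is the delicate part. The quantitative bookkeeping---how many distinct special points each $e \in S$ contributes after accounting for overlaps---then dictates the precise choice of $s$ and $n_1$ needed to drive the final contradiction through Theorem~\ref{spikes} and Proposition~\ref{spanning clique}(ii).
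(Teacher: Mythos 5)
Your proposal follows essentially the same route as the paper's proof: reduce via Theorem~\ref{new reduction}, dispose of the spanning-clique outcome with Proposition~\ref{spanning clique}(i), show in the connected outcome that every element of $S$ is special, and then combine Theorem~\ref{clique minor} with Theorem~\ref{spikes} to contradict Proposition~\ref{spanning clique}(ii). Two remarks. First, a quantitative slip: you take $t=6$ in Theorem~\ref{new reduction}, but to invoke Theorem~\ref{clique minor} you need $\elem(N) > \alpha_{\ref{clique minor}}(3, r_{\ref{spikes}}(3,6,22))\cdot r(N)$, and the quadratic bound $\elem(N) > \binom{r(N)+2}{2}-2$ only dominates that linear function once $r(N)$ exceeds a large constant; so $t$ must be chosen that large (the paper's $n_0$), not $6$. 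Second, the special-point claim is far less delicate than you suggest: if $e$ is on no $4$-point line, then $\elem(N)-\elem(N/e)\ge r(N)+2$ forces $e$ to lie on at least $r(N)+1$ three-point lines, so the corresponding $r(N)+1$ points of $\si(N/e)$ sit in a matroid of rank $r(N)-1$ and therefore contain two distinct circuits, each yielding a spike restriction with tip $e$; no vertical connectivity is needed for this step (connectivity is consumed only by Theorem~\ref{spikes}), and there is no overlap bookkeeping since each of the $44$ elements of $S$ is itself a special point.
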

\begin{proof}
Let $\cM$ be the class of matroids with no minor in $\{U_{2,5}, F_7, R_9, \U\}$.
Note that ${x + 2 \choose 2} - 2 = \frac{1}{2}x^2 + \frac{3}{2}x - 1$ for any positive number $x$.
Let $n_0$ be an integer so that ${x + 2 \choose 2} - 2 > \alpha_{\ref{clique minor}}(3, r_{\ref{spikes}}(3, 6, 22)) \cdot x$ for all $x \ge n_0$.
Define $$n_1 = r_{\ref{new reduction}}\Big(\frac{1}{2}, \frac{3}{2}, -1, 3, n_0, \max(s_{\ref{spikes}}(22), 44)\Big).$$
Assume that there is a simple matroid $M \in \cM$ with $r(M) \ge n_1$ and with  $|M| > {r(M) + 2 \choose 2} -2$.
By Theorem~\ref{new reduction} with $p(x) = {x + 2 \choose 2} - 2$ and $t = n_0$ and $s = \max\big(s_{\ref{spikes}}(22), 44\big)$, there is a minor $N$ of $M$ so that $r(N) \ge n_0$ and $|N| > {r(N) + 2 \choose 2} -2$, and either
\begin{enumerate}[(a)]
\item $N$ has a spanning-clique restriction, or

\item $N$ is vertically $s_{\ref{spikes}}(22)$-connected, and has a $44$-element independent set $S$ so that each $e \in S$ satisfies $\elem(N) - \elem(N/e) \ge r(N) + 2$.
\end{enumerate}

By Proposition~\ref{spanning clique}$(i)$, outcome (a) does not hold, so outcome (b) holds.

\begin{claim}
Each element in $S$ is special.
\end{claim}
\begin{proof}
For $e \in S$, let $\cL_N(e)$ denote the set of long lines of $N$ that contain $e$. 
Then 
\begin{align}
\elem(N) - \elem(N/e) = 1 + \sum_{L \in \cL_N(e)} \big(|L| - 2\big).
\end{align}
Let $N_1 = N|(\cup_{L \in \cL_N(e)}L)$, and note that $|\si(N_1/e)| = |\cL_N(e)|$.
If $e$ is on at least two $4$-point lines of $N$, then $e$ is special.
If $e$ is on no $4$-point line of $N$, then, since $\elem(N) - \elem(N/e) \ge r(N) + 2$, it follows from (1) that $|\cL_N(e) | \ge r(N) + 1$. 
Thus, $|\si(N_1/e)| \ge r(N) + 1$.
Since $\si(N_1/e)$ has rank at most $r(N) - 1$, this implies that $\si(N_1/e)$ has corank at least two, and so it contains at least two distinct circuits.
Then $e$ is a tip of two spike restrictions of $N$, and is thus special.
If $e$ is on exactly one $4$-point line of $N$, then (1) implies that $\si(N_1/e)$ has corank at least one and thus contains a circuit, so $e$ is a tip of a spike restriction of $N$.
\end{proof}

Since $r(N) \ge n_0$ and $|N| > {r(N) + 2 \choose 2} -2$, Theorem~\ref{clique minor} and the definition of $n_0$ imply that $N$ has an $M(K_{r_{\ref{spikes}}(3, 6, 22) + 1})$-minor.
Then, since $N$ has no $U_{2,5}$- or $R_9$-minor, Theorem~\ref{spikes} implies that $N$ has a simple minor of rank at least six with a spanning-clique restriction and at least $22$ special points, which contradicts Proposition~\ref{spanning clique}$(ii)$.
\end{proof}

\begin{proof}[Proof of Theorem~\ref{main}]
Let $n_1$ be the constant from Theorem~\ref{restatement}, and let $A$ be a $2$-modular matrix of rank $r \ge n_1$ with no zero-column and no parallel pair of columns.
Let $M$ be the vector matroid of $A$.
Then $M$ is a simple rank-$r$ matroid with no minor isomorphic to a matroid in $\{U_{2,5}, F_7, R_9, \U\}$.
By Theorem~\ref{restatement}, $|M| \le {r(M) + 2 \choose 2} - 2$, so $A$ has at most ${r(M) + 2 \choose 2} - 2$ columns.
\end{proof}
\end{section}


\end{document}